\newtheorem{theorem}{Theorem}
\newtheorem{proposition}{Proposition}
\newtheorem{corollary}{Corollary}
\author{
 El-Mehdi Mehiri \\
  Department of Mathematics and Industrial Engineering,\\
  Polytechnique Montréal, Université de Montréal. \\
  \texttt{elmehdi.mehiri@polymtl.ca} \\
  \texttt{mehiri314@gmail.com} \\
}
\title{\textbf{On the restricted Hanoi Graphs}}
\date{}
\begin{document}

\maketitle

 \begin{abstract}
 \noindent
Consider the restricted Hanoi graphs which correspond to the variants of the famous Tower of Hanoi problem with multiple pegs where moves of the discs are restricted throughout the arcs of a movement digraph whose vertices represent the pegs of the puzzle and an arc from vertex $p$ to vertex $q$ exists if and only if moves from peg $p$ to peg $q$ are allowed. In this paper, we gave some notes on how to construct the restricted Hanoi graphs as well as some combinatorial results on the number of arcs in these graphs.
 \\[2mm]
 {\bf Keywords:} Hanoi graphs; restricted Hanoi graphs; Tower of Hanoi; enumeration.\\[2mm]
 {\bf 2020 Mathematics Subject Classification:} 00A08, 05A15, 05C20, 68R10.
 \end{abstract}

\baselineskip=0.20in

\section{Introduction}
The well-known Tower of Hanoi problem was first proposed by the french mathematician Edouard Lucas in 1884 \cite{itard1960ed}. In this puzzle, three pegs and $n$ discs are given of distinct diameters, initially, all discs are stacked on one peg called the source peg in which they are ordered monotonically by diameter with the smallest at the top and the largest at the bottom. The player is required to transfer the tower of the $n$ discs from the source peg to another peg called the destination peg using the minimum number of moves, and respecting the following rules:

\begin{itemize}
\item[$(i)$] at each step only one disc can be moved;
       \item[$(ii)$] the disc moved must be a topmost disc;
       \item[$(iii)$] a disc cannot reside on a smaller one.
\end{itemize}
The unique optimal solution to this problem can be provided by the well-known recursive algorithm that accomplishes this task using
$2^{n}+1$ moves \cite{hinz2018tower}.

Many variants of the Tower of Hanoi problem have been considered in the literature, such as increasing the number
of pegs \cite{dudeney2002canterbury,frame1941solution,stockmeyer1994variations }, forbidding certain movements
of discs between certain pegs \cite{stockmeyer1994variations, atkinson1981cyclic, sapir2004tower}, and allowing the discs to be placed on top of smaller discs \cite{wood1980towers}. Composing these variants is for the sake of studying their
combinatorial as well as algorithmic aspects. For more about the Tower of Hanoi
problem, we point the interested reader to the comprehensive monograph \cite{hinz2018tower}.

In this paper, we consider the generalized Tower of Hanoi in which the number of pegs is  arbitrary, and moves between pegs are restricted to certain rules. 

Let $\mathcal{N}=\{1,\ldots,n\}$ and $\mathcal{M}=\{1,\ldots,m\}$ be the sets of discs and pegs respectively where $n\geq 0$ and $m\geq 3$. 
The allowed moves between pegs in a variant of the Tower of Hanoi based on restricting disc moves can be characterized using a digraph $D=(\mathcal{M},A(D))$ such that $(p,q)\in A(D)$ if and only if the movement of a disc from peg $p$ to peg $q$ is allowed. The state graph of the Tower of Hanoi of $n$ discs and $m$ pegs in which moves between pegs are restricted using a movement digraph $D$ is denoted by $H_{n}^{m}(D)$, where the set of vertices of $H_{n}^{m}(D)$ is the set of all possible distributions of the $n$ discs on the $m$ pegs, and two states (vertices) being adjacent whenever one is obtained from the other by a
legal move.  Each vertex $u\in V(H_{n}^{m}(D))$ is represented  as an $n$-tuple, i.e,  $u = u_{n}\ldots u_{1}$ where $u_{d}\in \mathcal{M}$ is the peg on which disc $d\in\mathcal{N}$ is lying in state $u$ \cite{hinz2022dudeney}. 

It is proven that the only solvable variants based on disc movement restrictions are those having a strongly connected movement digraph \cite[Proposition 8.4]{hinz2018tower}, i.e., the task to transfer a  tower of $n$ discs from one peg to another has always a  solution, no matter the source and destination pegs and how many discs are considered. Among the most known and studied variants of the Tower of Hanoi with restricted disc moves on $m$ pegs, we cite the classical, the linear, the cyclic, and the star variants having the movement digraph $\overleftrightarrow{K}_{m}$, $\overleftrightarrow{P}_{m}$, $\overrightarrow{C}_{m}$, and $\overleftrightarrow{K}_{1,m-1}$ respectively. For more about these variants, we refer to \cite{hinz2022dudeney,sapir2004tower,stockmeyer1994variations,atkinson1981cyclic,berend2006cyclic, allouche2005restricted,berend2012tower}.
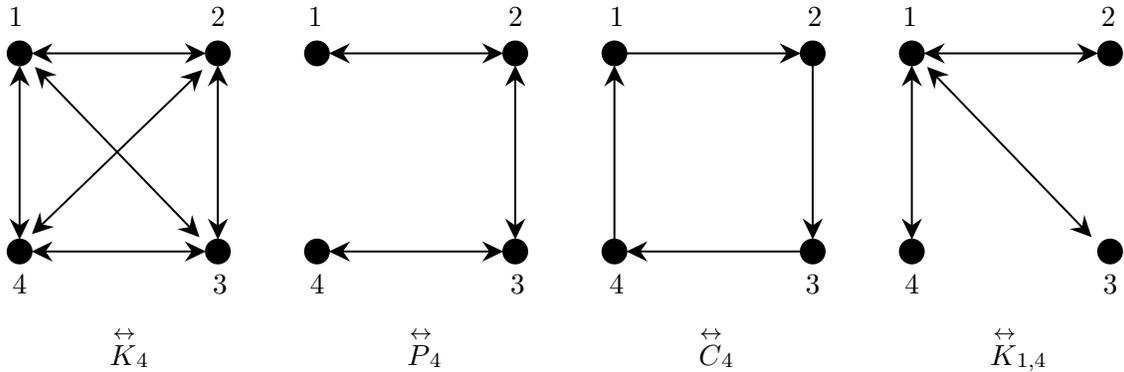
\begin{figure}[H]
    \centering

\tikzset{every picture/.style={line width=0.75pt}} 

\begin{tikzpicture}[x=0.75pt,y=0.75pt,yscale=-1,xscale=1]

\draw  [fill={rgb, 255:red, 0; green, 0; blue, 0 }  ,fill opacity=1 ] (166,51) .. controls (166,47.69) and (168.69,45) .. (172,45) .. controls (175.31,45) and (178,47.69) .. (178,51) .. controls (178,54.31) and (175.31,57) .. (172,57) .. controls (168.69,57) and (166,54.31) .. (166,51) -- cycle ;
\draw  [fill={rgb, 255:red, 0; green, 0; blue, 0 }  ,fill opacity=1 ] (66,51) .. controls (66,47.69) and (68.69,45) .. (72,45) .. controls (75.31,45) and (78,47.69) .. (78,51) .. controls (78,54.31) and (75.31,57) .. (72,57) .. controls (68.69,57) and (66,54.31) .. (66,51) -- cycle ;
\draw  [fill={rgb, 255:red, 0; green, 0; blue, 0 }  ,fill opacity=1 ] (166,151) .. controls (166,147.69) and (168.69,145) .. (172,145) .. controls (175.31,145) and (178,147.69) .. (178,151) .. controls (178,154.31) and (175.31,157) .. (172,157) .. controls (168.69,157) and (166,154.31) .. (166,151) -- cycle ;
\draw  [fill={rgb, 255:red, 0; green, 0; blue, 0 }  ,fill opacity=1 ] (66,151) .. controls (66,147.69) and (68.69,145) .. (72,145) .. controls (75.31,145) and (78,147.69) .. (78,151) .. controls (78,154.31) and (75.31,157) .. (72,157) .. controls (68.69,157) and (66,154.31) .. (66,151) -- cycle ;
\draw    (81,51) -- (163,51) ;
\draw [shift={(166,51)}, rotate = 180] [fill={rgb, 255:red, 0; green, 0; blue, 0 }  ][line width=0.08]  [draw opacity=0] (10.72,-5.15) -- (0,0) -- (10.72,5.15) -- (7.12,0) -- cycle    ;
\draw [shift={(78,51)}, rotate = 0] [fill={rgb, 255:red, 0; green, 0; blue, 0 }  ][line width=0.08]  [draw opacity=0] (10.72,-5.15) -- (0,0) -- (10.72,5.15) -- (7.12,0) -- cycle    ;
\draw    (81,151) -- (163,151) ;
\draw [shift={(166,151)}, rotate = 180] [fill={rgb, 255:red, 0; green, 0; blue, 0 }  ][line width=0.08]  [draw opacity=0] (10.72,-5.15) -- (0,0) -- (10.72,5.15) -- (7.12,0) -- cycle    ;
\draw [shift={(78,151)}, rotate = 0] [fill={rgb, 255:red, 0; green, 0; blue, 0 }  ][line width=0.08]  [draw opacity=0] (10.72,-5.15) -- (0,0) -- (10.72,5.15) -- (7.12,0) -- cycle    ;
\draw    (172,142) -- (172,60) ;
\draw [shift={(172,57)}, rotate = 90] [fill={rgb, 255:red, 0; green, 0; blue, 0 }  ][line width=0.08]  [draw opacity=0] (10.72,-5.15) -- (0,0) -- (10.72,5.15) -- (7.12,0) -- cycle    ;
\draw [shift={(172,145)}, rotate = 270] [fill={rgb, 255:red, 0; green, 0; blue, 0 }  ][line width=0.08]  [draw opacity=0] (10.72,-5.15) -- (0,0) -- (10.72,5.15) -- (7.12,0) -- cycle    ;
\draw    (72,142) -- (72,60) ;
\draw [shift={(72,57)}, rotate = 90] [fill={rgb, 255:red, 0; green, 0; blue, 0 }  ][line width=0.08]  [draw opacity=0] (10.72,-5.15) -- (0,0) -- (10.72,5.15) -- (7.12,0) -- cycle    ;
\draw [shift={(72,145)}, rotate = 270] [fill={rgb, 255:red, 0; green, 0; blue, 0 }  ][line width=0.08]  [draw opacity=0] (10.72,-5.15) -- (0,0) -- (10.72,5.15) -- (7.12,0) -- cycle    ;
\draw    (80.36,140.22) -- (162.04,61.38) ;
\draw [shift={(164.2,59.3)}, rotate = 136.02] [fill={rgb, 255:red, 0; green, 0; blue, 0 }  ][line width=0.08]  [draw opacity=0] (10.72,-5.15) -- (0,0) -- (10.72,5.15) -- (7.12,0) -- cycle    ;
\draw [shift={(78.2,142.3)}, rotate = 316.02] [fill={rgb, 255:red, 0; green, 0; blue, 0 }  ][line width=0.08]  [draw opacity=0] (10.72,-5.15) -- (0,0) -- (10.72,5.15) -- (7.12,0) -- cycle    ;
\draw    (81.78,59.96) -- (160.62,141.64) ;
\draw [shift={(162.7,143.8)}, rotate = 226.02] [fill={rgb, 255:red, 0; green, 0; blue, 0 }  ][line width=0.08]  [draw opacity=0] (10.72,-5.15) -- (0,0) -- (10.72,5.15) -- (7.12,0) -- cycle    ;
\draw [shift={(79.7,57.8)}, rotate = 46.02] [fill={rgb, 255:red, 0; green, 0; blue, 0 }  ][line width=0.08]  [draw opacity=0] (10.72,-5.15) -- (0,0) -- (10.72,5.15) -- (7.12,0) -- cycle    ;
\draw  [fill={rgb, 255:red, 0; green, 0; blue, 0 }  ,fill opacity=1 ] (316,51) .. controls (316,47.69) and (318.69,45) .. (322,45) .. controls (325.31,45) and (328,47.69) .. (328,51) .. controls (328,54.31) and (325.31,57) .. (322,57) .. controls (318.69,57) and (316,54.31) .. (316,51) -- cycle ;
\draw  [fill={rgb, 255:red, 0; green, 0; blue, 0 }  ,fill opacity=1 ] (216,51) .. controls (216,47.69) and (218.69,45) .. (222,45) .. controls (225.31,45) and (228,47.69) .. (228,51) .. controls (228,54.31) and (225.31,57) .. (222,57) .. controls (218.69,57) and (216,54.31) .. (216,51) -- cycle ;
\draw  [fill={rgb, 255:red, 0; green, 0; blue, 0 }  ,fill opacity=1 ] (316,151) .. controls (316,147.69) and (318.69,145) .. (322,145) .. controls (325.31,145) and (328,147.69) .. (328,151) .. controls (328,154.31) and (325.31,157) .. (322,157) .. controls (318.69,157) and (316,154.31) .. (316,151) -- cycle ;
\draw  [fill={rgb, 255:red, 0; green, 0; blue, 0 }  ,fill opacity=1 ] (216,151) .. controls (216,147.69) and (218.69,145) .. (222,145) .. controls (225.31,145) and (228,147.69) .. (228,151) .. controls (228,154.31) and (225.31,157) .. (222,157) .. controls (218.69,157) and (216,154.31) .. (216,151) -- cycle ;
\draw    (231,51) -- (313,51) ;
\draw [shift={(316,51)}, rotate = 180] [fill={rgb, 255:red, 0; green, 0; blue, 0 }  ][line width=0.08]  [draw opacity=0] (10.72,-5.15) -- (0,0) -- (10.72,5.15) -- (7.12,0) -- cycle    ;
\draw [shift={(228,51)}, rotate = 0] [fill={rgb, 255:red, 0; green, 0; blue, 0 }  ][line width=0.08]  [draw opacity=0] (10.72,-5.15) -- (0,0) -- (10.72,5.15) -- (7.12,0) -- cycle    ;
\draw    (231,151) -- (313,151) ;
\draw [shift={(316,151)}, rotate = 180] [fill={rgb, 255:red, 0; green, 0; blue, 0 }  ][line width=0.08]  [draw opacity=0] (10.72,-5.15) -- (0,0) -- (10.72,5.15) -- (7.12,0) -- cycle    ;
\draw [shift={(228,151)}, rotate = 0] [fill={rgb, 255:red, 0; green, 0; blue, 0 }  ][line width=0.08]  [draw opacity=0] (10.72,-5.15) -- (0,0) -- (10.72,5.15) -- (7.12,0) -- cycle    ;
\draw    (322,142) -- (322,60) ;
\draw [shift={(322,57)}, rotate = 90] [fill={rgb, 255:red, 0; green, 0; blue, 0 }  ][line width=0.08]  [draw opacity=0] (10.72,-5.15) -- (0,0) -- (10.72,5.15) -- (7.12,0) -- cycle    ;
\draw [shift={(322,145)}, rotate = 270] [fill={rgb, 255:red, 0; green, 0; blue, 0 }  ][line width=0.08]  [draw opacity=0] (10.72,-5.15) -- (0,0) -- (10.72,5.15) -- (7.12,0) -- cycle    ;
\draw  [fill={rgb, 255:red, 0; green, 0; blue, 0 }  ,fill opacity=1 ] (466,51) .. controls (466,47.69) and (468.69,45) .. (472,45) .. controls (475.31,45) and (478,47.69) .. (478,51) .. controls (478,54.31) and (475.31,57) .. (472,57) .. controls (468.69,57) and (466,54.31) .. (466,51) -- cycle ;
\draw  [fill={rgb, 255:red, 0; green, 0; blue, 0 }  ,fill opacity=1 ] (366,51) .. controls (366,47.69) and (368.69,45) .. (372,45) .. controls (375.31,45) and (378,47.69) .. (378,51) .. controls (378,54.31) and (375.31,57) .. (372,57) .. controls (368.69,57) and (366,54.31) .. (366,51) -- cycle ;
\draw  [fill={rgb, 255:red, 0; green, 0; blue, 0 }  ,fill opacity=1 ] (466,151) .. controls (466,147.69) and (468.69,145) .. (472,145) .. controls (475.31,145) and (478,147.69) .. (478,151) .. controls (478,154.31) and (475.31,157) .. (472,157) .. controls (468.69,157) and (466,154.31) .. (466,151) -- cycle ;
\draw  [fill={rgb, 255:red, 0; green, 0; blue, 0 }  ,fill opacity=1 ] (366,151) .. controls (366,147.69) and (368.69,145) .. (372,145) .. controls (375.31,145) and (378,147.69) .. (378,151) .. controls (378,154.31) and (375.31,157) .. (372,157) .. controls (368.69,157) and (366,154.31) .. (366,151) -- cycle ;
\draw    (378,51) -- (463,51) ;
\draw [shift={(466,51)}, rotate = 180] [fill={rgb, 255:red, 0; green, 0; blue, 0 }  ][line width=0.08]  [draw opacity=0] (10.72,-5.15) -- (0,0) -- (10.72,5.15) -- (7.12,0) -- cycle    ;
\draw    (381,151) -- (466,151) ;
\draw [shift={(378,151)}, rotate = 0] [fill={rgb, 255:red, 0; green, 0; blue, 0 }  ][line width=0.08]  [draw opacity=0] (10.72,-5.15) -- (0,0) -- (10.72,5.15) -- (7.12,0) -- cycle    ;
\draw    (472,142) -- (472,57) ;
\draw [shift={(472,145)}, rotate = 270] [fill={rgb, 255:red, 0; green, 0; blue, 0 }  ][line width=0.08]  [draw opacity=0] (10.72,-5.15) -- (0,0) -- (10.72,5.15) -- (7.12,0) -- cycle    ;
\draw    (372,145) -- (372,60) ;
\draw [shift={(372,57)}, rotate = 90] [fill={rgb, 255:red, 0; green, 0; blue, 0 }  ][line width=0.08]  [draw opacity=0] (10.72,-5.15) -- (0,0) -- (10.72,5.15) -- (7.12,0) -- cycle    ;
\draw  [fill={rgb, 255:red, 0; green, 0; blue, 0 }  ,fill opacity=1 ] (616,51) .. controls (616,47.69) and (618.69,45) .. (622,45) .. controls (625.31,45) and (628,47.69) .. (628,51) .. controls (628,54.31) and (625.31,57) .. (622,57) .. controls (618.69,57) and (616,54.31) .. (616,51) -- cycle ;
\draw  [fill={rgb, 255:red, 0; green, 0; blue, 0 }  ,fill opacity=1 ] (516,51) .. controls (516,47.69) and (518.69,45) .. (522,45) .. controls (525.31,45) and (528,47.69) .. (528,51) .. controls (528,54.31) and (525.31,57) .. (522,57) .. controls (518.69,57) and (516,54.31) .. (516,51) -- cycle ;
\draw  [fill={rgb, 255:red, 0; green, 0; blue, 0 }  ,fill opacity=1 ] (616,151) .. controls (616,147.69) and (618.69,145) .. (622,145) .. controls (625.31,145) and (628,147.69) .. (628,151) .. controls (628,154.31) and (625.31,157) .. (622,157) .. controls (618.69,157) and (616,154.31) .. (616,151) -- cycle ;
\draw  [fill={rgb, 255:red, 0; green, 0; blue, 0 }  ,fill opacity=1 ] (516,151) .. controls (516,147.69) and (518.69,145) .. (522,145) .. controls (525.31,145) and (528,147.69) .. (528,151) .. controls (528,154.31) and (525.31,157) .. (522,157) .. controls (518.69,157) and (516,154.31) .. (516,151) -- cycle ;
\draw    (531,51) -- (613,51) ;
\draw [shift={(616,51)}, rotate = 180] [fill={rgb, 255:red, 0; green, 0; blue, 0 }  ][line width=0.08]  [draw opacity=0] (10.72,-5.15) -- (0,0) -- (10.72,5.15) -- (7.12,0) -- cycle    ;
\draw [shift={(528,51)}, rotate = 0] [fill={rgb, 255:red, 0; green, 0; blue, 0 }  ][line width=0.08]  [draw opacity=0] (10.72,-5.15) -- (0,0) -- (10.72,5.15) -- (7.12,0) -- cycle    ;
\draw    (522,142) -- (522,60) ;
\draw [shift={(522,57)}, rotate = 90] [fill={rgb, 255:red, 0; green, 0; blue, 0 }  ][line width=0.08]  [draw opacity=0] (10.72,-5.15) -- (0,0) -- (10.72,5.15) -- (7.12,0) -- cycle    ;
\draw [shift={(522,145)}, rotate = 270] [fill={rgb, 255:red, 0; green, 0; blue, 0 }  ][line width=0.08]  [draw opacity=0] (10.72,-5.15) -- (0,0) -- (10.72,5.15) -- (7.12,0) -- cycle    ;
\draw    (531.78,59.96) -- (610.62,141.64) ;
\draw [shift={(612.7,143.8)}, rotate = 226.02] [fill={rgb, 255:red, 0; green, 0; blue, 0 }  ][line width=0.08]  [draw opacity=0] (10.72,-5.15) -- (0,0) -- (10.72,5.15) -- (7.12,0) -- cycle    ;
\draw [shift={(529.7,57.8)}, rotate = 46.02] [fill={rgb, 255:red, 0; green, 0; blue, 0 }  ][line width=0.08]  [draw opacity=0] (10.72,-5.15) -- (0,0) -- (10.72,5.15) -- (7.12,0) -- cycle    ;

\draw (116,188.4) node [anchor=north west][inner sep=0.75pt]    {$\stackrel {\leftrightarrow}{K}_{4}$};
\draw (65,26.4) node [anchor=north west][inner sep=0.75pt]    {$1$};
\draw (216,26.4) node [anchor=north west][inner sep=0.75pt]    {$1$};
\draw (368,26.4) node [anchor=north west][inner sep=0.75pt]    {$1$};
\draw (516,26.4) node [anchor=north west][inner sep=0.75pt]    {$1$};
\draw (167,26.4) node [anchor=north west][inner sep=0.75pt]    {$2$};
\draw (317,26.4) node [anchor=north west][inner sep=0.75pt]    {$2$};
\draw (468,26.4) node [anchor=north west][inner sep=0.75pt]    {$2$};
\draw (616,26.4) node [anchor=north west][inner sep=0.75pt]    {$2$};
\draw (168,161.4) node [anchor=north west][inner sep=0.75pt]    {$3$};
\draw (318,161.4) node [anchor=north west][inner sep=0.75pt]    {$3$};
\draw (467,161.4) node [anchor=north west][inner sep=0.75pt]    {$3$};
\draw (617,161.4) node [anchor=north west][inner sep=0.75pt]    {$3$};
\draw (517,161.4) node [anchor=north west][inner sep=0.75pt]    {$4$};
\draw (368,161.4) node [anchor=north west][inner sep=0.75pt]    {$4$};
\draw (217,161.4) node [anchor=north west][inner sep=0.75pt]    {$4$};
\draw (67,161.4) node [anchor=north west][inner sep=0.75pt]    {$4$};
\draw (266,188.4) node [anchor=north west][inner sep=0.75pt]    {$\stackrel {\leftrightarrow}{P}_{4}$};
\draw (413,188.4) node [anchor=north west][inner sep=0.75pt]    {$\stackrel {\leftrightarrow}{C}_{4}$};
\draw (560,188.4) node [anchor=north west][inner sep=0.75pt]    {$\stackrel {\leftrightarrow}{K}_{1,4}$};

\end{tikzpicture}

    \caption{The movement digraph of the classical, linear, cyclic, and star variants with $m=4$ pegs.}
    \label{fig:my_label}
\end{figure}

The restricted Hanoi graphs $H_{n}^{m}(D)$, especially where $D=\overleftrightarrow{K}_{m}$ (which is the only variant of this type that has no restrictions on moves, means that all movements are allowed) were studied under different aspects, such as combinatorics, graphs theory, and algorithmic aspects \cite{berend2006diameter,klavvzar2005hanoi,azriel2008infinite,leiss2013worst}. 

In this paper, we study these graphs for any movement digraph $D$ from the combinatorial counting, graphs theory, and algorithmic points of view. More precisely, we present an easy-to-implement generator of the graphs $H_{n}^{m}(D)$ based on the characterization of its arcs. We also present the number of states that allows disc $k\in\mathcal{N}$ to be moved, which allows us to deduce a new more general proof of the number of arcs in $H_{n}^{m}(D)$, namely $|A(H_{n}^{m}(D))|$.

\begin{figure}[H]
    \centering

\tikzset{every picture/.style={line width=0.75pt}} 

\begin{tikzpicture}[scale=0.8,x=0.75pt,y=0.75pt,yscale=-1,xscale=1]

\draw  [fill={rgb, 255:red, 0; green, 0; blue, 0 }  ,fill opacity=1 ] (151.74,93.94) .. controls (151.74,90.84) and (154.22,88.33) .. (157.28,88.33) .. controls (160.34,88.33) and (162.82,90.84) .. (162.82,93.94) .. controls (162.82,97.03) and (160.34,99.54) .. (157.28,99.54) .. controls (154.22,99.54) and (151.74,97.03) .. (151.74,93.94) -- cycle ;
\draw  [fill={rgb, 255:red, 0; green, 0; blue, 0 }  ,fill opacity=1 ] (151.74,37.87) .. controls (151.74,34.77) and (154.22,32.26) .. (157.28,32.26) .. controls (160.34,32.26) and (162.82,34.77) .. (162.82,37.87) .. controls (162.82,40.97) and (160.34,43.48) .. (157.28,43.48) .. controls (154.22,43.48) and (151.74,40.97) .. (151.74,37.87) -- cycle ;
\draw    (189.58,150) -- (157.28,37.87) ;
\draw    (157.28,93.94) -- (157.28,37.87) ;
\draw  [fill={rgb, 255:red, 0; green, 0; blue, 0 }  ,fill opacity=1 ] (184.04,150) .. controls (184.04,146.91) and (186.52,144.39) .. (189.58,144.39) .. controls (192.64,144.39) and (195.12,146.91) .. (195.12,150) .. controls (195.12,153.1) and (192.64,155.61) .. (189.58,155.61) .. controls (186.52,155.61) and (184.04,153.1) .. (184.04,150) -- cycle ;
\draw  [fill={rgb, 255:red, 0; green, 0; blue, 0 }  ,fill opacity=1 ] (119.45,150) .. controls (119.45,146.91) and (121.93,144.39) .. (124.98,144.39) .. controls (128.04,144.39) and (130.52,146.91) .. (130.52,150) .. controls (130.52,153.1) and (128.04,155.61) .. (124.98,155.61) .. controls (121.93,155.61) and (119.45,153.1) .. (119.45,150) -- cycle ;
\draw  [fill={rgb, 255:red, 0; green, 0; blue, 0 }  ,fill opacity=1 ] (119.45,206.07) .. controls (119.45,202.97) and (121.93,200.46) .. (124.98,200.46) .. controls (128.04,200.46) and (130.52,202.97) .. (130.52,206.07) .. controls (130.52,209.16) and (128.04,211.67) .. (124.98,211.67) .. controls (121.93,211.67) and (119.45,209.16) .. (119.45,206.07) -- cycle ;
\draw  [fill={rgb, 255:red, 0; green, 0; blue, 0 }  ,fill opacity=1 ] (184.04,206.07) .. controls (184.04,202.97) and (186.52,200.46) .. (189.58,200.46) .. controls (192.64,200.46) and (195.12,202.97) .. (195.12,206.07) .. controls (195.12,209.16) and (192.64,211.67) .. (189.58,211.67) .. controls (186.52,211.67) and (184.04,209.16) .. (184.04,206.07) -- cycle ;
\draw  [fill={rgb, 255:red, 0; green, 0; blue, 0 }  ,fill opacity=1 ] (151.74,262.72) .. controls (151.74,259.62) and (154.22,257.11) .. (157.28,257.11) .. controls (160.34,257.11) and (162.82,259.62) .. (162.82,262.72) .. controls (162.82,265.81) and (160.34,268.32) .. (157.28,268.32) .. controls (154.22,268.32) and (151.74,265.81) .. (151.74,262.72) -- cycle ;
\draw  [fill={rgb, 255:red, 0; green, 0; blue, 0 }  ,fill opacity=1 ] (231.99,234.1) .. controls (231.99,231) and (234.47,228.49) .. (237.53,228.49) .. controls (240.59,228.49) and (243.07,231) .. (243.07,234.1) .. controls (243.07,237.2) and (240.59,239.71) .. (237.53,239.71) .. controls (234.47,239.71) and (231.99,237.2) .. (231.99,234.1) -- cycle ;
\draw  [fill={rgb, 255:red, 0; green, 0; blue, 0 }  ,fill opacity=1 ] (264.29,290.75) .. controls (264.29,287.65) and (266.77,285.14) .. (269.83,285.14) .. controls (272.88,285.14) and (275.36,287.65) .. (275.36,290.75) .. controls (275.36,293.84) and (272.88,296.35) .. (269.83,296.35) .. controls (266.77,296.35) and (264.29,293.84) .. (264.29,290.75) -- cycle ;
\draw  [fill={rgb, 255:red, 0; green, 0; blue, 0 }  ,fill opacity=1 ] (199.69,290.75) .. controls (199.69,287.65) and (202.17,285.14) .. (205.23,285.14) .. controls (208.29,285.14) and (210.77,287.65) .. (210.77,290.75) .. controls (210.77,293.84) and (208.29,296.35) .. (205.23,296.35) .. controls (202.17,296.35) and (199.69,293.84) .. (199.69,290.75) -- cycle ;
\draw  [fill={rgb, 255:red, 0; green, 0; blue, 0 }  ,fill opacity=1 ] (71.5,234.1) .. controls (71.5,231) and (73.98,228.49) .. (77.03,228.49) .. controls (80.09,228.49) and (82.57,231) .. (82.57,234.1) .. controls (82.57,237.2) and (80.09,239.71) .. (77.03,239.71) .. controls (73.98,239.71) and (71.5,237.2) .. (71.5,234.1) -- cycle ;
\draw  [fill={rgb, 255:red, 0; green, 0; blue, 0 }  ,fill opacity=1 ] (103.8,290.17) .. controls (103.8,287.07) and (106.27,284.56) .. (109.33,284.56) .. controls (112.39,284.56) and (114.87,287.07) .. (114.87,290.17) .. controls (114.87,293.26) and (112.39,295.77) .. (109.33,295.77) .. controls (106.27,295.77) and (103.8,293.26) .. (103.8,290.17) -- cycle ;
\draw  [fill={rgb, 255:red, 0; green, 0; blue, 0 }  ,fill opacity=1 ] (39.2,290.17) .. controls (39.2,287.07) and (41.68,284.56) .. (44.74,284.56) .. controls (47.79,284.56) and (50.27,287.07) .. (50.27,290.17) .. controls (50.27,293.26) and (47.79,295.77) .. (44.74,295.77) .. controls (41.68,295.77) and (39.2,293.26) .. (39.2,290.17) -- cycle ;
\draw  [fill={rgb, 255:red, 0; green, 0; blue, 0 }  ,fill opacity=1 ] (312.24,318.78) .. controls (312.24,315.68) and (314.72,313.17) .. (317.78,313.17) .. controls (320.83,313.17) and (323.31,315.68) .. (323.31,318.78) .. controls (323.31,321.88) and (320.83,324.39) .. (317.78,324.39) .. controls (314.72,324.39) and (312.24,321.88) .. (312.24,318.78) -- cycle ;
\draw  [fill={rgb, 255:red, 0; green, 0; blue, 0 }  ,fill opacity=1 ] (-8.75,318.2) .. controls (-8.75,315.1) and (-6.27,312.59) .. (-3.21,312.59) .. controls (-0.15,312.59) and (2.32,315.1) .. (2.32,318.2) .. controls (2.32,321.3) and (-0.15,323.81) .. (-3.21,323.81) .. controls (-6.27,323.81) and (-8.75,321.3) .. (-8.75,318.2) -- cycle ;
\draw  [fill={rgb, 255:red, 0; green, 0; blue, 0 }  ,fill opacity=1 ] (151.74,227.56) .. controls (151.74,224.46) and (154.22,221.95) .. (157.28,221.95) .. controls (160.34,221.95) and (162.82,224.46) .. (162.82,227.56) .. controls (162.82,230.66) and (160.34,233.17) .. (157.28,233.17) .. controls (154.22,233.17) and (151.74,230.66) .. (151.74,227.56) -- cycle ;
\draw    (124.98,150) -- (157.28,37.87) ;
\draw    (124.98,150) -- (157.28,93.94) ;
\draw    (189.58,150) -- (157.28,93.94) ;
\draw    (184.04,150) -- (130.52,150) ;
\draw    (124.98,200.46) -- (124.98,155.61) ;
\draw    (189.58,200.46) -- (189.58,155.61) ;
\draw    (184.04,206.07) -- (130.52,206.07) ;
\draw    (157.28,262.72) -- (124.98,206.07) ;
\draw    (157.28,262.72) -- (189.58,206.07) ;
\draw    (157.28,227.56) -- (124.98,206.07) ;
\draw    (157.28,257.11) -- (157.28,233.17) ;
\draw    (157.28,227.56) -- (189.58,206.07) ;
\draw    (80.91,231.05) -- (119.45,206.07) ;
\draw    (109.33,290.17) -- (157.28,262.72) ;
\draw    (189.58,206.07) -- (237.53,234.1) ;
\draw    (162.82,262.72) -- (201.8,286.45) ;
\draw    (109.33,290.17) -- (77.03,234.1) ;
\draw    (205.23,290.75) -- (237.53,234.1) ;
\draw    (44.74,290.17) -- (77.03,234.1) ;
\draw    (44.74,290.17) -- (109.33,290.17) ;
\draw    (-3.21,318.2) -- (109.33,290.17) ;
\draw    (-3.21,318.2) -- (77.03,234.1) ;
\draw    (-3.21,318.2) -- (44.74,290.17) ;
\draw    (269.83,290.75) -- (317.78,318.78) ;
\draw    (205.23,290.75) -- (317.78,318.78) ;
\draw    (237.53,234.1) -- (317.78,318.78) ;
\draw    (237.53,234.1) -- (269.83,290.75) ;
\draw    (205.23,290.75) -- (269.83,290.75) ;
\draw    (189.58,150) -- (237.53,234.1) ;
\draw    (125.48,150.29) -- (77.03,234.1) ;
\draw    (205.23,290.75) -- (109.33,290.17) ;
\draw    (317.78,318.78) -- (-3.21,318.2) ;
\draw    (317.78,318.78) -- (157.28,37.87) ;
\draw    (-3.21,318.2) -- (157.28,37.87) ;

\draw  [fill={rgb, 255:red, 0; green, 0; blue, 0 }  ,fill opacity=1 ] (403.46,77) .. controls (403.46,73.9) and (405.94,71.39) .. (409,71.39) .. controls (412.06,71.39) and (414.54,73.9) .. (414.54,77) .. controls (414.54,80.1) and (412.06,82.61) .. (409,82.61) .. controls (405.94,82.61) and (403.46,80.1) .. (403.46,77) -- cycle ;
\draw  [fill={rgb, 255:red, 0; green, 0; blue, 0 }  ,fill opacity=1 ] (403.46,147) .. controls (403.46,143.9) and (405.94,141.39) .. (409,141.39) .. controls (412.06,141.39) and (414.54,143.9) .. (414.54,147) .. controls (414.54,150.1) and (412.06,152.61) .. (409,152.61) .. controls (405.94,152.61) and (403.46,150.1) .. (403.46,147) -- cycle ;
\draw  [fill={rgb, 255:red, 0; green, 0; blue, 0 }  ,fill opacity=1 ] (403.46,217) .. controls (403.46,213.9) and (405.94,211.39) .. (409,211.39) .. controls (412.06,211.39) and (414.54,213.9) .. (414.54,217) .. controls (414.54,220.1) and (412.06,222.61) .. (409,222.61) .. controls (405.94,222.61) and (403.46,220.1) .. (403.46,217) -- cycle ;
\draw  [fill={rgb, 255:red, 0; green, 0; blue, 0 }  ,fill opacity=1 ] (403.46,287) .. controls (403.46,283.9) and (405.94,281.39) .. (409,281.39) .. controls (412.06,281.39) and (414.54,283.9) .. (414.54,287) .. controls (414.54,290.1) and (412.06,292.61) .. (409,292.61) .. controls (405.94,292.61) and (403.46,290.1) .. (403.46,287) -- cycle ;
\draw  [fill={rgb, 255:red, 0; green, 0; blue, 0 }  ,fill opacity=1 ] (473.46,77) .. controls (473.46,73.9) and (475.94,71.39) .. (479,71.39) .. controls (482.06,71.39) and (484.54,73.9) .. (484.54,77) .. controls (484.54,80.1) and (482.06,82.61) .. (479,82.61) .. controls (475.94,82.61) and (473.46,80.1) .. (473.46,77) -- cycle ;
\draw  [fill={rgb, 255:red, 0; green, 0; blue, 0 }  ,fill opacity=1 ] (473.46,147) .. controls (473.46,143.9) and (475.94,141.39) .. (479,141.39) .. controls (482.06,141.39) and (484.54,143.9) .. (484.54,147) .. controls (484.54,150.1) and (482.06,152.61) .. (479,152.61) .. controls (475.94,152.61) and (473.46,150.1) .. (473.46,147) -- cycle ;
\draw  [fill={rgb, 255:red, 0; green, 0; blue, 0 }  ,fill opacity=1 ] (473.46,217) .. controls (473.46,213.9) and (475.94,211.39) .. (479,211.39) .. controls (482.06,211.39) and (484.54,213.9) .. (484.54,217) .. controls (484.54,220.1) and (482.06,222.61) .. (479,222.61) .. controls (475.94,222.61) and (473.46,220.1) .. (473.46,217) -- cycle ;
\draw  [fill={rgb, 255:red, 0; green, 0; blue, 0 }  ,fill opacity=1 ] (473.46,287) .. controls (473.46,283.9) and (475.94,281.39) .. (479,281.39) .. controls (482.06,281.39) and (484.54,283.9) .. (484.54,287) .. controls (484.54,290.1) and (482.06,292.61) .. (479,292.61) .. controls (475.94,292.61) and (473.46,290.1) .. (473.46,287) -- cycle ;
\draw  [fill={rgb, 255:red, 0; green, 0; blue, 0 }  ,fill opacity=1 ] (543.46,77) .. controls (543.46,73.9) and (545.94,71.39) .. (549,71.39) .. controls (552.06,71.39) and (554.54,73.9) .. (554.54,77) .. controls (554.54,80.1) and (552.06,82.61) .. (549,82.61) .. controls (545.94,82.61) and (543.46,80.1) .. (543.46,77) -- cycle ;
\draw  [fill={rgb, 255:red, 0; green, 0; blue, 0 }  ,fill opacity=1 ] (543.46,147) .. controls (543.46,143.9) and (545.94,141.39) .. (549,141.39) .. controls (552.06,141.39) and (554.54,143.9) .. (554.54,147) .. controls (554.54,150.1) and (552.06,152.61) .. (549,152.61) .. controls (545.94,152.61) and (543.46,150.1) .. (543.46,147) -- cycle ;
\draw  [fill={rgb, 255:red, 0; green, 0; blue, 0 }  ,fill opacity=1 ] (543.46,217) .. controls (543.46,213.9) and (545.94,211.39) .. (549,211.39) .. controls (552.06,211.39) and (554.54,213.9) .. (554.54,217) .. controls (554.54,220.1) and (552.06,222.61) .. (549,222.61) .. controls (545.94,222.61) and (543.46,220.1) .. (543.46,217) -- cycle ;
\draw  [fill={rgb, 255:red, 0; green, 0; blue, 0 }  ,fill opacity=1 ] (543.46,287) .. controls (543.46,283.9) and (545.94,281.39) .. (549,281.39) .. controls (552.06,281.39) and (554.54,283.9) .. (554.54,287) .. controls (554.54,290.1) and (552.06,292.61) .. (549,292.61) .. controls (545.94,292.61) and (543.46,290.1) .. (543.46,287) -- cycle ;
\draw  [fill={rgb, 255:red, 0; green, 0; blue, 0 }  ,fill opacity=1 ] (613.46,287) .. controls (613.46,283.9) and (615.94,281.39) .. (619,281.39) .. controls (622.06,281.39) and (624.54,283.9) .. (624.54,287) .. controls (624.54,290.1) and (622.06,292.61) .. (619,292.61) .. controls (615.94,292.61) and (613.46,290.1) .. (613.46,287) -- cycle ;
\draw  [fill={rgb, 255:red, 0; green, 0; blue, 0 }  ,fill opacity=1 ] (613.46,217) .. controls (613.46,213.9) and (615.94,211.39) .. (619,211.39) .. controls (622.06,211.39) and (624.54,213.9) .. (624.54,217) .. controls (624.54,220.1) and (622.06,222.61) .. (619,222.61) .. controls (615.94,222.61) and (613.46,220.1) .. (613.46,217) -- cycle ;
\draw  [fill={rgb, 255:red, 0; green, 0; blue, 0 }  ,fill opacity=1 ] (613.46,147) .. controls (613.46,143.9) and (615.94,141.39) .. (619,141.39) .. controls (622.06,141.39) and (624.54,143.9) .. (624.54,147) .. controls (624.54,150.1) and (622.06,152.61) .. (619,152.61) .. controls (615.94,152.61) and (613.46,150.1) .. (613.46,147) -- cycle ;
\draw  [fill={rgb, 255:red, 0; green, 0; blue, 0 }  ,fill opacity=1 ] (613.46,77) .. controls (613.46,73.9) and (615.94,71.39) .. (619,71.39) .. controls (622.06,71.39) and (624.54,73.9) .. (624.54,77) .. controls (624.54,80.1) and (622.06,82.61) .. (619,82.61) .. controls (615.94,82.61) and (613.46,80.1) .. (613.46,77) -- cycle ;
\draw    (409,147) -- (409,77) ;
\draw    (409,217) -- (409,147) ;
\draw    (409,287) -- (409,217) ;
\draw    (479,217) -- (409,217) ;
\draw    (479,287) -- (409,287) ;
\draw    (479,287) -- (479,217) ;
\draw    (479,217) -- (479,147) ;
\draw    (479,147) -- (479,77) ;
\draw    (549,147) -- (549,77) ;
\draw    (549,217) -- (549,147) ;
\draw    (549,287) -- (549,217) ;
\draw    (619,287) -- (619,217) ;
\draw    (619,217) -- (619,147) ;
\draw    (619,147) -- (619,77) ;
\draw    (549,77) -- (479,77) ;
\draw    (549,287) -- (479,287) ;
\draw    (619,147) -- (549,147) ;
\draw    (619,77) -- (549,77) ;

\draw  [fill={rgb, 255:red, 0; green, 0; blue, 0 }  ,fill opacity=1 ] (403.46,450.86) .. controls (403.46,447.76) and (405.94,445.25) .. (409,445.25) .. controls (412.06,445.25) and (414.54,447.76) .. (414.54,450.86) .. controls (414.54,453.95) and (412.06,456.46) .. (409,456.46) .. controls (405.94,456.46) and (403.46,453.95) .. (403.46,450.86) -- cycle ;
\draw  [fill={rgb, 255:red, 0; green, 0; blue, 0 }  ,fill opacity=1 ] (403.46,520.86) .. controls (403.46,517.76) and (405.94,515.25) .. (409,515.25) .. controls (412.06,515.25) and (414.54,517.76) .. (414.54,520.86) .. controls (414.54,523.95) and (412.06,526.46) .. (409,526.46) .. controls (405.94,526.46) and (403.46,523.95) .. (403.46,520.86) -- cycle ;
\draw  [fill={rgb, 255:red, 0; green, 0; blue, 0 }  ,fill opacity=1 ] (403.46,590.86) .. controls (403.46,587.76) and (405.94,585.25) .. (409,585.25) .. controls (412.06,585.25) and (414.54,587.76) .. (414.54,590.86) .. controls (414.54,593.95) and (412.06,596.46) .. (409,596.46) .. controls (405.94,596.46) and (403.46,593.95) .. (403.46,590.86) -- cycle ;
\draw  [fill={rgb, 255:red, 0; green, 0; blue, 0 }  ,fill opacity=1 ] (403.46,660.86) .. controls (403.46,657.76) and (405.94,655.25) .. (409,655.25) .. controls (412.06,655.25) and (414.54,657.76) .. (414.54,660.86) .. controls (414.54,663.95) and (412.06,666.46) .. (409,666.46) .. controls (405.94,666.46) and (403.46,663.95) .. (403.46,660.86) -- cycle ;
\draw  [fill={rgb, 255:red, 0; green, 0; blue, 0 }  ,fill opacity=1 ] (473.46,450.86) .. controls (473.46,447.76) and (475.94,445.25) .. (479,445.25) .. controls (482.06,445.25) and (484.54,447.76) .. (484.54,450.86) .. controls (484.54,453.95) and (482.06,456.46) .. (479,456.46) .. controls (475.94,456.46) and (473.46,453.95) .. (473.46,450.86) -- cycle ;
\draw  [fill={rgb, 255:red, 0; green, 0; blue, 0 }  ,fill opacity=1 ] (473.46,520.86) .. controls (473.46,517.76) and (475.94,515.25) .. (479,515.25) .. controls (482.06,515.25) and (484.54,517.76) .. (484.54,520.86) .. controls (484.54,523.95) and (482.06,526.46) .. (479,526.46) .. controls (475.94,526.46) and (473.46,523.95) .. (473.46,520.86) -- cycle ;
\draw  [fill={rgb, 255:red, 0; green, 0; blue, 0 }  ,fill opacity=1 ] (473.46,590.86) .. controls (473.46,587.76) and (475.94,585.25) .. (479,585.25) .. controls (482.06,585.25) and (484.54,587.76) .. (484.54,590.86) .. controls (484.54,593.95) and (482.06,596.46) .. (479,596.46) .. controls (475.94,596.46) and (473.46,593.95) .. (473.46,590.86) -- cycle ;
\draw  [fill={rgb, 255:red, 0; green, 0; blue, 0 }  ,fill opacity=1 ] (473.46,660.86) .. controls (473.46,657.76) and (475.94,655.25) .. (479,655.25) .. controls (482.06,655.25) and (484.54,657.76) .. (484.54,660.86) .. controls (484.54,663.95) and (482.06,666.46) .. (479,666.46) .. controls (475.94,666.46) and (473.46,663.95) .. (473.46,660.86) -- cycle ;
\draw  [fill={rgb, 255:red, 0; green, 0; blue, 0 }  ,fill opacity=1 ] (543.46,450.86) .. controls (543.46,447.76) and (545.94,445.25) .. (549,445.25) .. controls (552.06,445.25) and (554.54,447.76) .. (554.54,450.86) .. controls (554.54,453.95) and (552.06,456.46) .. (549,456.46) .. controls (545.94,456.46) and (543.46,453.95) .. (543.46,450.86) -- cycle ;
\draw  [fill={rgb, 255:red, 0; green, 0; blue, 0 }  ,fill opacity=1 ] (543.46,520.86) .. controls (543.46,517.76) and (545.94,515.25) .. (549,515.25) .. controls (552.06,515.25) and (554.54,517.76) .. (554.54,520.86) .. controls (554.54,523.95) and (552.06,526.46) .. (549,526.46) .. controls (545.94,526.46) and (543.46,523.95) .. (543.46,520.86) -- cycle ;
\draw  [fill={rgb, 255:red, 0; green, 0; blue, 0 }  ,fill opacity=1 ] (543.46,590.86) .. controls (543.46,587.76) and (545.94,585.25) .. (549,585.25) .. controls (552.06,585.25) and (554.54,587.76) .. (554.54,590.86) .. controls (554.54,593.95) and (552.06,596.46) .. (549,596.46) .. controls (545.94,596.46) and (543.46,593.95) .. (543.46,590.86) -- cycle ;
\draw  [fill={rgb, 255:red, 0; green, 0; blue, 0 }  ,fill opacity=1 ] (543.46,660.86) .. controls (543.46,657.76) and (545.94,655.25) .. (549,655.25) .. controls (552.06,655.25) and (554.54,657.76) .. (554.54,660.86) .. controls (554.54,663.95) and (552.06,666.46) .. (549,666.46) .. controls (545.94,666.46) and (543.46,663.95) .. (543.46,660.86) -- cycle ;
\draw  [fill={rgb, 255:red, 0; green, 0; blue, 0 }  ,fill opacity=1 ] (613.46,660.86) .. controls (613.46,657.76) and (615.94,655.25) .. (619,655.25) .. controls (622.06,655.25) and (624.54,657.76) .. (624.54,660.86) .. controls (624.54,663.95) and (622.06,666.46) .. (619,666.46) .. controls (615.94,666.46) and (613.46,663.95) .. (613.46,660.86) -- cycle ;
\draw  [fill={rgb, 255:red, 0; green, 0; blue, 0 }  ,fill opacity=1 ] (613.46,590.86) .. controls (613.46,587.76) and (615.94,585.25) .. (619,585.25) .. controls (622.06,585.25) and (624.54,587.76) .. (624.54,590.86) .. controls (624.54,593.95) and (622.06,596.46) .. (619,596.46) .. controls (615.94,596.46) and (613.46,593.95) .. (613.46,590.86) -- cycle ;
\draw  [fill={rgb, 255:red, 0; green, 0; blue, 0 }  ,fill opacity=1 ] (613.46,520.86) .. controls (613.46,517.76) and (615.94,515.25) .. (619,515.25) .. controls (622.06,515.25) and (624.54,517.76) .. (624.54,520.86) .. controls (624.54,523.95) and (622.06,526.46) .. (619,526.46) .. controls (615.94,526.46) and (613.46,523.95) .. (613.46,520.86) -- cycle ;
\draw  [fill={rgb, 255:red, 0; green, 0; blue, 0 }  ,fill opacity=1 ] (613.46,450.86) .. controls (613.46,447.76) and (615.94,445.25) .. (619,445.25) .. controls (622.06,445.25) and (624.54,447.76) .. (624.54,450.86) .. controls (624.54,453.95) and (622.06,456.46) .. (619,456.46) .. controls (615.94,456.46) and (613.46,453.95) .. (613.46,450.86) -- cycle ;
\draw    (409,512.25) -- (409,450.86) ;
\draw [shift={(409,515.25)}, rotate = 270] [fill={rgb, 255:red, 0; green, 0; blue, 0 }  ][line width=0.08]  [draw opacity=0] (10.72,-5.15) -- (0,0) -- (10.72,5.15) -- (7.12,0) -- cycle    ;
\draw    (409,582.25) -- (409,520.86) ;
\draw [shift={(409,585.25)}, rotate = 270] [fill={rgb, 255:red, 0; green, 0; blue, 0 }  ][line width=0.08]  [draw opacity=0] (10.72,-5.15) -- (0,0) -- (10.72,5.15) -- (7.12,0) -- cycle    ;
\draw    (409,652.25) -- (409,590.86) ;
\draw [shift={(409,655.25)}, rotate = 270] [fill={rgb, 255:red, 0; green, 0; blue, 0 }  ][line width=0.08]  [draw opacity=0] (10.72,-5.15) -- (0,0) -- (10.72,5.15) -- (7.12,0) -- cycle    ;
\draw    (470.46,590.86) -- (409,590.86) ;
\draw [shift={(473.46,590.86)}, rotate = 180] [fill={rgb, 255:red, 0; green, 0; blue, 0 }  ][line width=0.08]  [draw opacity=0] (10.72,-5.15) -- (0,0) -- (10.72,5.15) -- (7.12,0) -- cycle    ;
\draw    (470.46,660.86) -- (409,660.86) ;
\draw [shift={(473.46,660.86)}, rotate = 180] [fill={rgb, 255:red, 0; green, 0; blue, 0 }  ][line width=0.08]  [draw opacity=0] (10.72,-5.15) -- (0,0) -- (10.72,5.15) -- (7.12,0) -- cycle    ;
\draw    (479,652.25) -- (479,590.86) ;
\draw [shift={(479,655.25)}, rotate = 270] [fill={rgb, 255:red, 0; green, 0; blue, 0 }  ][line width=0.08]  [draw opacity=0] (10.72,-5.15) -- (0,0) -- (10.72,5.15) -- (7.12,0) -- cycle    ;
\draw    (479,582.25) -- (479,515.25) ;
\draw [shift={(479,585.25)}, rotate = 270] [fill={rgb, 255:red, 0; green, 0; blue, 0 }  ][line width=0.08]  [draw opacity=0] (10.72,-5.15) -- (0,0) -- (10.72,5.15) -- (7.12,0) -- cycle    ;
\draw    (479,512.25) -- (479,450.86) ;
\draw [shift={(479,515.25)}, rotate = 270] [fill={rgb, 255:red, 0; green, 0; blue, 0 }  ][line width=0.08]  [draw opacity=0] (10.72,-5.15) -- (0,0) -- (10.72,5.15) -- (7.12,0) -- cycle    ;
\draw    (549,512.25) -- (549,450.86) ;
\draw [shift={(549,515.25)}, rotate = 270] [fill={rgb, 255:red, 0; green, 0; blue, 0 }  ][line width=0.08]  [draw opacity=0] (10.72,-5.15) -- (0,0) -- (10.72,5.15) -- (7.12,0) -- cycle    ;
\draw    (549,582.25) -- (549,520.86) ;
\draw [shift={(549,585.25)}, rotate = 270] [fill={rgb, 255:red, 0; green, 0; blue, 0 }  ][line width=0.08]  [draw opacity=0] (10.72,-5.15) -- (0,0) -- (10.72,5.15) -- (7.12,0) -- cycle    ;
\draw    (549,652.25) -- (549,590.86) ;
\draw [shift={(549,655.25)}, rotate = 270] [fill={rgb, 255:red, 0; green, 0; blue, 0 }  ][line width=0.08]  [draw opacity=0] (10.72,-5.15) -- (0,0) -- (10.72,5.15) -- (7.12,0) -- cycle    ;
\draw    (619,652.25) -- (619,590.86) ;
\draw [shift={(619,655.25)}, rotate = 270] [fill={rgb, 255:red, 0; green, 0; blue, 0 }  ][line width=0.08]  [draw opacity=0] (10.72,-5.15) -- (0,0) -- (10.72,5.15) -- (7.12,0) -- cycle    ;
\draw    (619,582.25) -- (619,520.86) ;
\draw [shift={(619,585.25)}, rotate = 270] [fill={rgb, 255:red, 0; green, 0; blue, 0 }  ][line width=0.08]  [draw opacity=0] (10.72,-5.15) -- (0,0) -- (10.72,5.15) -- (7.12,0) -- cycle    ;
\draw    (619,512.25) -- (619,450.86) ;
\draw [shift={(619,515.25)}, rotate = 270] [fill={rgb, 255:red, 0; green, 0; blue, 0 }  ][line width=0.08]  [draw opacity=0] (10.72,-5.15) -- (0,0) -- (10.72,5.15) -- (7.12,0) -- cycle    ;
\draw    (540.46,450.86) -- (479,450.86) ;
\draw [shift={(543.46,450.86)}, rotate = 180] [fill={rgb, 255:red, 0; green, 0; blue, 0 }  ][line width=0.08]  [draw opacity=0] (10.72,-5.15) -- (0,0) -- (10.72,5.15) -- (7.12,0) -- cycle    ;
\draw    (540.46,660.86) -- (484.54,660.86) ;
\draw [shift={(543.46,660.86)}, rotate = 180] [fill={rgb, 255:red, 0; green, 0; blue, 0 }  ][line width=0.08]  [draw opacity=0] (10.72,-5.15) -- (0,0) -- (10.72,5.15) -- (7.12,0) -- cycle    ;
\draw    (610.46,520.86) -- (549,520.86) ;
\draw [shift={(613.46,520.86)}, rotate = 180] [fill={rgb, 255:red, 0; green, 0; blue, 0 }  ][line width=0.08]  [draw opacity=0] (10.72,-5.15) -- (0,0) -- (10.72,5.15) -- (7.12,0) -- cycle    ;
\draw    (610.46,450.86) -- (549,450.86) ;
\draw [shift={(613.46,450.86)}, rotate = 180] [fill={rgb, 255:red, 0; green, 0; blue, 0 }  ][line width=0.08]  [draw opacity=0] (10.72,-5.15) -- (0,0) -- (10.72,5.15) -- (7.12,0) -- cycle    ;
\draw    (619,520.86) .. controls (532.08,561.74) and (492.98,561.17) .. (411.48,527.5) ;
\draw [shift={(409,526.46)}, rotate = 22.64] [fill={rgb, 255:red, 0; green, 0; blue, 0 }  ][line width=0.08]  [draw opacity=0] (10.72,-5.15) -- (0,0) -- (10.72,5.15) -- (7.12,0) -- cycle    ;
\draw    (619,590.86) .. controls (532.08,631.74) and (492.98,631.17) .. (411.48,597.5) ;
\draw [shift={(409,596.46)}, rotate = 22.64] [fill={rgb, 255:red, 0; green, 0; blue, 0 }  ][line width=0.08]  [draw opacity=0] (10.72,-5.15) -- (0,0) -- (10.72,5.15) -- (7.12,0) -- cycle    ;
\draw    (409,660.86) .. controls (368.11,573.94) and (368.68,534.84) .. (402.36,453.34) ;
\draw [shift={(403.39,450.86)}, rotate = 112.64] [fill={rgb, 255:red, 0; green, 0; blue, 0 }  ][line width=0.08]  [draw opacity=0] (10.72,-5.15) -- (0,0) -- (10.72,5.15) -- (7.12,0) -- cycle    ;
\draw    (479,660.86) .. controls (438.11,573.94) and (438.68,534.84) .. (472.36,453.34) ;
\draw [shift={(473.39,450.86)}, rotate = 112.64] [fill={rgb, 255:red, 0; green, 0; blue, 0 }  ][line width=0.08]  [draw opacity=0] (10.72,-5.15) -- (0,0) -- (10.72,5.15) -- (7.12,0) -- cycle    ;
\draw    (549,660.86) .. controls (508.11,573.94) and (508.68,534.84) .. (542.36,453.34) ;
\draw [shift={(543.39,450.86)}, rotate = 112.64] [fill={rgb, 255:red, 0; green, 0; blue, 0 }  ][line width=0.08]  [draw opacity=0] (10.72,-5.15) -- (0,0) -- (10.72,5.15) -- (7.12,0) -- cycle    ;
\draw    (619.07,660.86) .. controls (578.18,573.94) and (578.75,534.84) .. (612.43,453.34) ;
\draw [shift={(613.46,450.86)}, rotate = 112.64] [fill={rgb, 255:red, 0; green, 0; blue, 0 }  ][line width=0.08]  [draw opacity=0] (10.72,-5.15) -- (0,0) -- (10.72,5.15) -- (7.12,0) -- cycle    ;

\draw  [fill={rgb, 255:red, 0; green, 0; blue, 0 }  ,fill opacity=1 ] (151.74,471.79) .. controls (151.74,468.7) and (154.22,466.19) .. (157.28,466.19) .. controls (160.34,466.19) and (162.82,468.7) .. (162.82,471.79) .. controls (162.82,474.89) and (160.34,477.4) .. (157.28,477.4) .. controls (154.22,477.4) and (151.74,474.89) .. (151.74,471.79) -- cycle ;
\draw  [fill={rgb, 255:red, 0; green, 0; blue, 0 }  ,fill opacity=1 ] (151.74,415.73) .. controls (151.74,412.63) and (154.22,410.12) .. (157.28,410.12) .. controls (160.34,410.12) and (162.82,412.63) .. (162.82,415.73) .. controls (162.82,418.82) and (160.34,421.33) .. (157.28,421.33) .. controls (154.22,421.33) and (151.74,418.82) .. (151.74,415.73) -- cycle ;
\draw    (189.58,527.86) -- (157.28,415.73) ;
\draw    (157.28,471.79) -- (157.28,415.73) ;
\draw  [fill={rgb, 255:red, 0; green, 0; blue, 0 }  ,fill opacity=1 ] (184.04,527.86) .. controls (184.04,524.76) and (186.52,522.25) .. (189.58,522.25) .. controls (192.64,522.25) and (195.12,524.76) .. (195.12,527.86) .. controls (195.12,530.96) and (192.64,533.47) .. (189.58,533.47) .. controls (186.52,533.47) and (184.04,530.96) .. (184.04,527.86) -- cycle ;
\draw  [fill={rgb, 255:red, 0; green, 0; blue, 0 }  ,fill opacity=1 ] (119.45,527.86) .. controls (119.45,524.76) and (121.93,522.25) .. (124.98,522.25) .. controls (128.04,522.25) and (130.52,524.76) .. (130.52,527.86) .. controls (130.52,530.96) and (128.04,533.47) .. (124.98,533.47) .. controls (121.93,533.47) and (119.45,530.96) .. (119.45,527.86) -- cycle ;
\draw  [fill={rgb, 255:red, 0; green, 0; blue, 0 }  ,fill opacity=1 ] (119.45,583.93) .. controls (119.45,580.83) and (121.93,578.32) .. (124.98,578.32) .. controls (128.04,578.32) and (130.52,580.83) .. (130.52,583.93) .. controls (130.52,587.02) and (128.04,589.53) .. (124.98,589.53) .. controls (121.93,589.53) and (119.45,587.02) .. (119.45,583.93) -- cycle ;
\draw  [fill={rgb, 255:red, 0; green, 0; blue, 0 }  ,fill opacity=1 ] (184.04,583.93) .. controls (184.04,580.83) and (186.52,578.32) .. (189.58,578.32) .. controls (192.64,578.32) and (195.12,580.83) .. (195.12,583.93) .. controls (195.12,587.02) and (192.64,589.53) .. (189.58,589.53) .. controls (186.52,589.53) and (184.04,587.02) .. (184.04,583.93) -- cycle ;
\draw  [fill={rgb, 255:red, 0; green, 0; blue, 0 }  ,fill opacity=1 ] (151.74,640.57) .. controls (151.74,637.48) and (154.22,634.97) .. (157.28,634.97) .. controls (160.34,634.97) and (162.82,637.48) .. (162.82,640.57) .. controls (162.82,643.67) and (160.34,646.18) .. (157.28,646.18) .. controls (154.22,646.18) and (151.74,643.67) .. (151.74,640.57) -- cycle ;
\draw  [fill={rgb, 255:red, 0; green, 0; blue, 0 }  ,fill opacity=1 ] (231.99,611.96) .. controls (231.99,608.86) and (234.47,606.35) .. (237.53,606.35) .. controls (240.59,606.35) and (243.07,608.86) .. (243.07,611.96) .. controls (243.07,615.06) and (240.59,617.57) .. (237.53,617.57) .. controls (234.47,617.57) and (231.99,615.06) .. (231.99,611.96) -- cycle ;
\draw  [fill={rgb, 255:red, 0; green, 0; blue, 0 }  ,fill opacity=1 ] (264.29,668.61) .. controls (264.29,665.51) and (266.77,663) .. (269.83,663) .. controls (272.88,663) and (275.36,665.51) .. (275.36,668.61) .. controls (275.36,671.7) and (272.88,674.21) .. (269.83,674.21) .. controls (266.77,674.21) and (264.29,671.7) .. (264.29,668.61) -- cycle ;
\draw  [fill={rgb, 255:red, 0; green, 0; blue, 0 }  ,fill opacity=1 ] (199.69,668.61) .. controls (199.69,665.51) and (202.17,663) .. (205.23,663) .. controls (208.29,663) and (210.77,665.51) .. (210.77,668.61) .. controls (210.77,671.7) and (208.29,674.21) .. (205.23,674.21) .. controls (202.17,674.21) and (199.69,671.7) .. (199.69,668.61) -- cycle ;
\draw  [fill={rgb, 255:red, 0; green, 0; blue, 0 }  ,fill opacity=1 ] (71.5,611.96) .. controls (71.5,608.86) and (73.98,606.35) .. (77.03,606.35) .. controls (80.09,606.35) and (82.57,608.86) .. (82.57,611.96) .. controls (82.57,615.06) and (80.09,617.57) .. (77.03,617.57) .. controls (73.98,617.57) and (71.5,615.06) .. (71.5,611.96) -- cycle ;
\draw  [fill={rgb, 255:red, 0; green, 0; blue, 0 }  ,fill opacity=1 ] (103.8,668.02) .. controls (103.8,664.93) and (106.27,662.42) .. (109.33,662.42) .. controls (112.39,662.42) and (114.87,664.93) .. (114.87,668.02) .. controls (114.87,671.12) and (112.39,673.63) .. (109.33,673.63) .. controls (106.27,673.63) and (103.8,671.12) .. (103.8,668.02) -- cycle ;
\draw  [fill={rgb, 255:red, 0; green, 0; blue, 0 }  ,fill opacity=1 ] (39.2,668.02) .. controls (39.2,664.93) and (41.68,662.42) .. (44.74,662.42) .. controls (47.79,662.42) and (50.27,664.93) .. (50.27,668.02) .. controls (50.27,671.12) and (47.79,673.63) .. (44.74,673.63) .. controls (41.68,673.63) and (39.2,671.12) .. (39.2,668.02) -- cycle ;
\draw  [fill={rgb, 255:red, 0; green, 0; blue, 0 }  ,fill opacity=1 ] (312.24,696.64) .. controls (312.24,693.54) and (314.72,691.03) .. (317.78,691.03) .. controls (320.83,691.03) and (323.31,693.54) .. (323.31,696.64) .. controls (323.31,699.74) and (320.83,702.25) .. (317.78,702.25) .. controls (314.72,702.25) and (312.24,699.74) .. (312.24,696.64) -- cycle ;
\draw  [fill={rgb, 255:red, 0; green, 0; blue, 0 }  ,fill opacity=1 ] (-8.75,696.06) .. controls (-8.75,692.96) and (-6.27,690.45) .. (-3.21,690.45) .. controls (-0.15,690.45) and (2.32,692.96) .. (2.32,696.06) .. controls (2.32,699.15) and (-0.15,701.66) .. (-3.21,701.66) .. controls (-6.27,701.66) and (-8.75,699.15) .. (-8.75,696.06) -- cycle ;
\draw  [fill={rgb, 255:red, 0; green, 0; blue, 0 }  ,fill opacity=1 ] (151.74,605.42) .. controls (151.74,602.32) and (154.22,599.81) .. (157.28,599.81) .. controls (160.34,599.81) and (162.82,602.32) .. (162.82,605.42) .. controls (162.82,608.51) and (160.34,611.02) .. (157.28,611.02) .. controls (154.22,611.02) and (151.74,608.51) .. (151.74,605.42) -- cycle ;
\draw    (124.98,527.86) -- (157.28,415.73) ;
\draw    (124.98,578.32) -- (124.98,533.47) ;
\draw    (189.58,578.32) -- (189.58,533.47) ;
\draw    (157.28,605.42) -- (124.98,583.93) ;
\draw    (157.28,634.97) -- (157.28,611.02) ;
\draw    (157.28,605.42) -- (189.58,583.93) ;
\draw    (80.91,608.91) -- (119.45,583.93) ;
\draw    (109.33,668.02) -- (157.28,640.57) ;
\draw    (189.58,583.93) -- (237.53,611.96) ;
\draw    (162.82,640.57) -- (201.8,664.31) ;
\draw    (-3.21,696.06) -- (109.33,668.02) ;
\draw    (-3.21,696.06) -- (77.03,611.96) ;
\draw    (-3.21,696.06) -- (44.74,668.02) ;
\draw    (269.83,668.61) -- (317.78,696.64) ;
\draw    (205.23,668.61) -- (317.78,696.64) ;
\draw    (237.53,611.96) -- (317.78,696.64) ;

\draw (385.35,430.59) node [anchor=north west][inner sep=0.75pt]  [font=\footnotesize]  {$11$};
\draw (385.35,513.59) node [anchor=north west][inner sep=0.75pt]  [font=\footnotesize]  {$12$};
\draw (385.35,579.59) node [anchor=north west][inner sep=0.75pt]  [font=\footnotesize]  {$13$};
\draw (385.35,672.59) node [anchor=north west][inner sep=0.75pt]  [font=\footnotesize]  {$14$};
\draw (460.18,573.59) node [anchor=north west][inner sep=0.75pt]  [font=\footnotesize]  {$23$};
\draw (465.51,672.59) node [anchor=north west][inner sep=0.75pt]  [font=\footnotesize]  {$24$};
\draw (459.18,504.59) node [anchor=north west][inner sep=0.75pt]  [font=\footnotesize]  {$22$};
\draw (470.18,430.59) node [anchor=north west][inner sep=0.75pt]  [font=\footnotesize]  {$21$};
\draw (538.18,430.59) node [anchor=north west][inner sep=0.75pt]  [font=\footnotesize]  {$31$};
\draw (541.79,672.59) node [anchor=north west][inner sep=0.75pt]  [font=\footnotesize]  {$34$};
\draw (528.18,506.59) node [anchor=north west][inner sep=0.75pt]  [font=\footnotesize]  {$32$};
\draw (528.18,573.59) node [anchor=north west][inner sep=0.75pt]  [font=\footnotesize]  {$33$};
\draw (626,513.59) node [anchor=north west][inner sep=0.75pt]  [font=\footnotesize]  {$42$};
\draw (626,430.59) node [anchor=north west][inner sep=0.75pt]  [font=\footnotesize]  {$41$};
\draw (626,582.59) node [anchor=north west][inner sep=0.75pt]  [font=\footnotesize]  {$43$};
\draw (626,672.59) node [anchor=north west][inner sep=0.75pt]  [font=\footnotesize]  {$44$};
\draw (471.33,705.59) node [anchor=north west][inner sep=0.75pt]    {$H_{2}^{4}(\vec{C}_{4})$};
\draw (150.59,587.36) node [anchor=north west][inner sep=0.75pt]  [font=\footnotesize]  {$11$};
\draw (151.51,649.97) node [anchor=north west][inner sep=0.75pt]  [font=\footnotesize]  {$12$};
\draw (193.03,568.67) node [anchor=north west][inner sep=0.75pt]  [font=\footnotesize]  {$13$};
\draw (106.29,564.93) node [anchor=north west][inner sep=0.75pt]  [font=\footnotesize]  {$14$};
\draw (151.51,396.73) node [anchor=north west][inner sep=0.75pt]  [font=\footnotesize]  {$21$};
\draw (148.74,486.44) node [anchor=north west][inner sep=0.75pt]  [font=\footnotesize]  {$22$};
\draw (197.65,515.41) node [anchor=north west][inner sep=0.75pt]  [font=\footnotesize]  {$23$};
\draw (104.45,514.47) node [anchor=north west][inner sep=0.75pt]  [font=\footnotesize]  {$24$};
\draw (63.84,590.16) node [anchor=north west][inner sep=0.75pt]  [font=\footnotesize]  {$34$};
\draw (53.69,652.77) node [anchor=north west][inner sep=0.75pt]  [font=\footnotesize]  {$33$};
\draw (104.45,676.13) node [anchor=north west][inner sep=0.75pt]  [font=\footnotesize]  {$32$};
\draw (-25.67,691.08) node [anchor=north west][inner sep=0.75pt]  [font=\footnotesize]  {$31$};
\draw (324.58,699.45) node [anchor=north west][inner sep=0.75pt]  [font=\footnotesize]  {$41$};
\draw (196.73,678) node [anchor=north west][inner sep=0.75pt]  [font=\footnotesize]  {$42$};
\draw (242.87,596.7) node [anchor=north west][inner sep=0.75pt]  [font=\footnotesize]  {$43$};
\draw (246.56,656.51) node [anchor=north west][inner sep=0.75pt]  [font=\footnotesize]  {$44$};
\draw (109.32,705.59) node [anchor=north west][inner sep=0.75pt]    {$H_{2}^{4}(\stackrel {\leftrightarrow}{K}_{1,3})$};
\draw (150.59,209.5) node [anchor=north west][inner sep=0.75pt]  [font=\footnotesize]  {$11$};
\draw (151.51,272.11) node [anchor=north west][inner sep=0.75pt]  [font=\footnotesize]  {$12$};
\draw (193.03,190.81) node [anchor=north west][inner sep=0.75pt]  [font=\footnotesize]  {$13$};
\draw (106.29,187.07) node [anchor=north west][inner sep=0.75pt]  [font=\footnotesize]  {$14$};
\draw (151.51,18.88) node [anchor=north west][inner sep=0.75pt]  [font=\footnotesize]  {$21$};
\draw (148.74,108.58) node [anchor=north west][inner sep=0.75pt]  [font=\footnotesize]  {$22$};
\draw (197.65,137.55) node [anchor=north west][inner sep=0.75pt]  [font=\footnotesize]  {$23$};
\draw (104.45,136.61) node [anchor=north west][inner sep=0.75pt]  [font=\footnotesize]  {$24$};
\draw (63.84,212.3) node [anchor=north west][inner sep=0.75pt]  [font=\footnotesize]  {$34$};
\draw (53.69,274.91) node [anchor=north west][inner sep=0.75pt]  [font=\footnotesize]  {$33$};
\draw (104.45,298.27) node [anchor=north west][inner sep=0.75pt]  [font=\footnotesize]  {$32$};
\draw (-25.67,313.22) node [anchor=north west][inner sep=0.75pt]  [font=\footnotesize]  {$31$};
\draw (324.58,321.59) node [anchor=north west][inner sep=0.75pt]  [font=\footnotesize]  {$41$};
\draw (196.73,300.14) node [anchor=north west][inner sep=0.75pt]  [font=\footnotesize]  {$42$};
\draw (242.87,218.85) node [anchor=north west][inner sep=0.75pt]  [font=\footnotesize]  {$43$};
\draw (246.56,278.65) node [anchor=north west][inner sep=0.75pt]  [font=\footnotesize]  {$44$};
\draw (109.32,327.73) node [anchor=north west][inner sep=0.75pt]    {$H_{2}^{4}(\stackrel {\leftrightarrow}{K}_{4})$};
\draw (385.35,56.73) node [anchor=north west][inner sep=0.75pt]  [font=\footnotesize]  {$11$};
\draw (385.35,139.73) node [anchor=north west][inner sep=0.75pt]  [font=\footnotesize]  {$12$};
\draw (385.35,205.73) node [anchor=north west][inner sep=0.75pt]  [font=\footnotesize]  {$13$};
\draw (385.35,298.73) node [anchor=north west][inner sep=0.75pt]  [font=\footnotesize]  {$14$};
\draw (460.18,199.73) node [anchor=north west][inner sep=0.75pt]  [font=\footnotesize]  {$23$};
\draw (465.51,298.73) node [anchor=north west][inner sep=0.75pt]  [font=\footnotesize]  {$24$};
\draw (459.18,130.73) node [anchor=north west][inner sep=0.75pt]  [font=\footnotesize]  {$22$};
\draw (470.18,56.73) node [anchor=north west][inner sep=0.75pt]  [font=\footnotesize]  {$21$};
\draw (538.18,56.73) node [anchor=north west][inner sep=0.75pt]  [font=\footnotesize]  {$31$};
\draw (541.79,298.73) node [anchor=north west][inner sep=0.75pt]  [font=\footnotesize]  {$34$};
\draw (528.18,132.73) node [anchor=north west][inner sep=0.75pt]  [font=\footnotesize]  {$32$};
\draw (528.18,199.73) node [anchor=north west][inner sep=0.75pt]  [font=\footnotesize]  {$33$};
\draw (626,139.73) node [anchor=north west][inner sep=0.75pt]  [font=\footnotesize]  {$42$};
\draw (626,56.73) node [anchor=north west][inner sep=0.75pt]  [font=\footnotesize]  {$41$};
\draw (626,208.73) node [anchor=north west][inner sep=0.75pt]  [font=\footnotesize]  {$43$};
\draw (626,298.73) node [anchor=north west][inner sep=0.75pt]  [font=\footnotesize]  {$44$};
\draw (487.33,327.73) node [anchor=north west][inner sep=0.75pt]    {$H_{2}^{4}(\stackrel {\leftrightarrow}{P}_{4})$};

\end{tikzpicture}

    \caption{The restricted Hanoi graphs of $n=2$ discs and $m=4$ pegs under the restrictions of the movement digraphs of the classical, linear, star, and cyclic variants.}
    \label{fig:my_label2}
\end{figure}
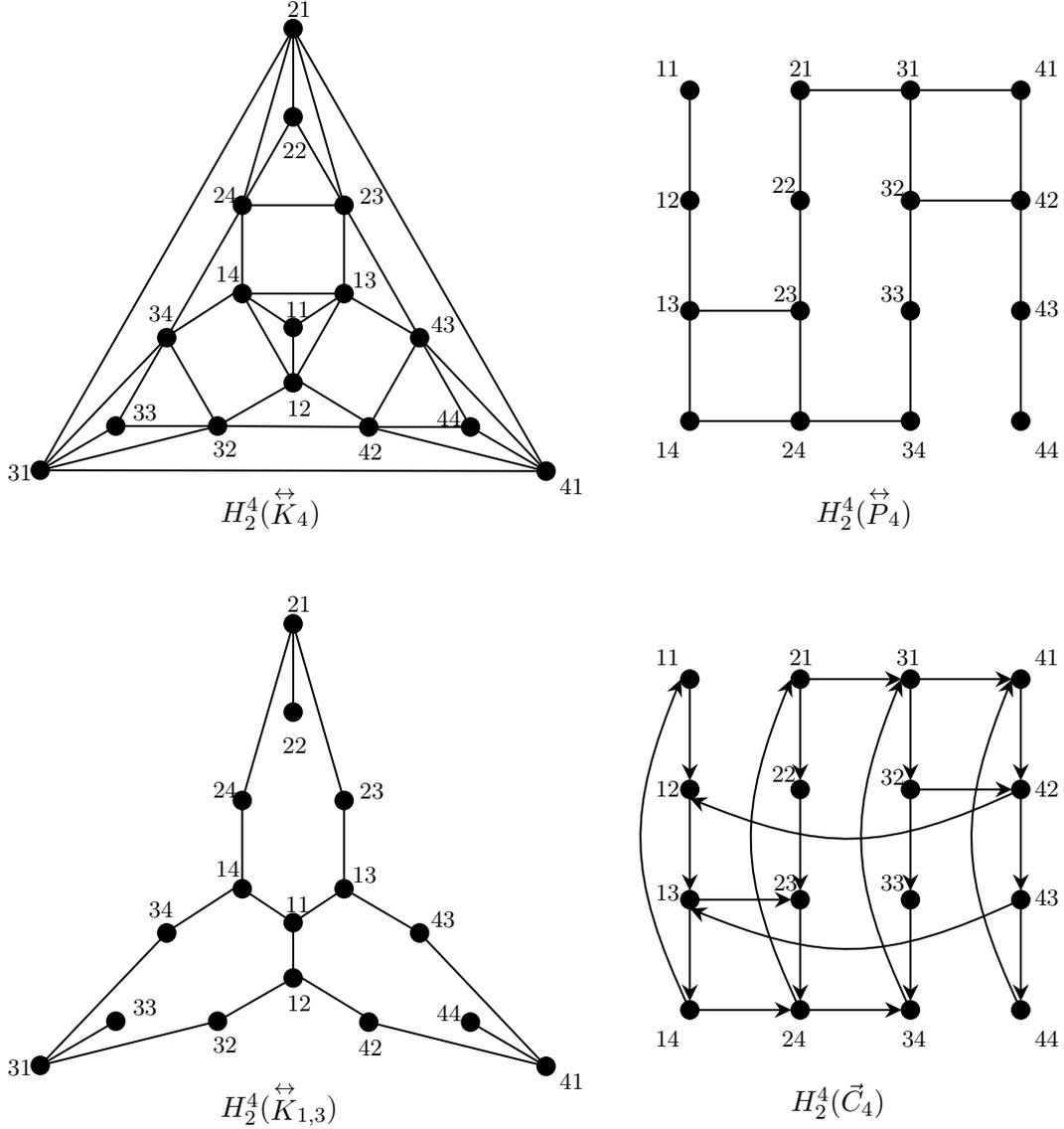

\section{Constructing the restricted Hanoi graphs}
In this section we present a theorem that characterize   the arcs of $H_{n}^{m}(D)$ based on the three rules $(i)-(iii)$ mentioned in the introduction and the properties of the movement digraph. This theorem allows to design a generator of $H_{n}^{m}(D)$ graphs for any number of discs, pegs and any movement digraph $D$.

\begin{theorem}
\label{theoremI} For all integers $n\geq0$, $m\geq 3$, and a digraph $D$ of $m$ vertices, we have  $(u,v)\in A(H_{n}^{m}(D))$ only if there exists a unique  $ k \in \mathcal{N}$ such that the following conditions are fulfilled :
\begin{enumerate}
    \item $u_{k}\neq v_{k}$ and  $u_{d}=v_{d}$ for all $d\in N\setminus\{k\}$; (only one disc can be moved)
    \item $u_{d}\neq u_{k}$  for all $d<k$; (the disc to be moved is a topmost disc) 
    \item  $v_{d}\neq v_{k}$ for all $d<k$; (a disc cannot reside on a smaller one) 
    \item $(u_{k},v_{k})\in A(D)$; (discs are allowed to move from peg $u_{k}$ to peg $v_{k}$)
\end{enumerate}
    
\end{theorem}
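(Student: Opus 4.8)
The plan is to unwind the definition of adjacency in $H_n^m(D)$ directly, translating each of the puzzle rules $(i)$–$(iii)$ together with the digraph constraint $(iv)$ into the corresponding arithmetic condition on the tuples $u=u_n\ldots u_1$ and $v=v_n\ldots v_1$. The one structural fact I would establish first, and on which everything else rests, is the \emph{stacking invariant}: in any state $w\in V(H_n^m(D))$ the discs lying on a fixed peg $p$ are automatically arranged in decreasing order of diameter, so that the topmost disc on peg $p$ is precisely the smallest-indexed disc placed there, i.e. $\min\{d\in\mathcal{N}:w_d=p\}$ whenever this set is nonempty. This holds because the $n$-tuple only records \emph{which} peg carries each disc, while rule $(iii)$ forces the vertical order within a peg; hence every assignment $\mathcal{N}\to\mathcal{M}$ corresponds to exactly one legal configuration, and ``topmost'' becomes the purely combinatorial statement of index-minimality.

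Granting this, I would argue as follows. Suppose $(u,v)\in A(H_n^m(D))$, so that $v$ arises from $u$ by a single legal move. By rule $(i)$ exactly one disc changes peg, whence $u$ and $v$ differ in exactly one coordinate: were they to differ in two or more, at least two discs would have moved, and were they to agree everywhere we would have $u=v$ and no move at all. Denoting this index by $k$ yields at once the uniqueness of $k$ together with condition $1$, namely $u_k\neq v_k$ and $u_d=v_d$ for all $d\in\mathcal{N}\setminus\{k\}$.

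Next I would read off conditions $2$ and $3$ from the stacking invariant. Since the moved disc $k$ is lifted off peg $u_k$, rule $(ii)$ requires it to be topmost there in state $u$; by the invariant this is equivalent to $k=\min\{d:u_d=u_k\}$, i.e. $u_d\neq u_k$ for all $d<k$, which is condition $2$. Symmetrically, disc $k$ is set down on peg $v_k$ and, by rule $(iii)$, must not come to rest on a smaller disc, so in state $v$ no disc $d<k$ may occupy peg $v_k$; that is, $v_d\neq v_k$ for all $d<k$, which is condition $3$. Finally, the move transfers a disc from peg $u_k$ to peg $v_k$, and by the very definition of the movement digraph this is permitted exactly when $(u_k,v_k)\in A(D)$, giving condition $4$.

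The argument is essentially a bookkeeping translation, so I do not anticipate a deep obstacle; the only point demanding genuine care is the stacking invariant, since it is what licenses replacing the geometric notion ``$k$ is the topmost disc'' by the index condition ``$u_d\neq u_k$ for all $d<k$''. I would therefore spell out explicitly why the tuple representation encodes a legal state uniquely, so that no information about vertical order is lost, before invoking it in the derivations of conditions $2$ and $3$; once that is secured, the remaining steps are immediate, and the degenerate case $n=0$ (no discs, hence no arcs) holds vacuously.
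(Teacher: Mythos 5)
Your proposal is correct and follows essentially the same route as the paper: a rule-by-rule translation of the puzzle constraints $(i)$--$(iii)$ and the digraph restriction into the four arithmetic conditions on the tuples. The only difference is that you make explicit the stacking invariant (that the tuple representation determines the vertical order, so ``topmost'' means index-minimal), which the paper's proof uses tacitly; this is a welcome clarification but not a different argument.
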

\begin{proof}
    The first condition is equivalent to the first rule $(i)$. Therefore,  $(u,v)\in A(H_{n}^{m}(D))$ only if each disc is lying on the same peg in both states $u$ and $v$ except exactly one disc which is equivalent to the existence of a unique index $k\in N$ such that $u_{d}=v_{d}$ for all $d\in N\setminus\{k\}$ and $u_{k}\neq v_{k}$.

    The second condition is equivalent to the second rule $(ii)$, which implies that   $(u,v)\in A(H_{n}^{m}(D))$ requires that all the discs $d<k$ are not lying on the peg on which disc $k$ is lying, namely peg $u_{k}$, which can be insured by implying  $u_{d}=v_{d}\neq u_{k}$ for all $d<k$.

    The third condition is equivalent to the third rule $(iii)$, then $(u,v)\in A(H_{n}^{m}(D))$ only if the smallest disc on the peg on which disc $k$ is going to be moved on, namely peg $v_{k}$, is bigger than disc $k$, in other words, all discs $d<k$ are not lying on peg $v_{k}$. Hence, $u_{d}=v_{d}\neq v_{k}$ for all $d<k$.

    The fourth condition is for respecting the restrictions of the movement digraph $D$. Disc $k$ can be moved from peg $u_{k}$ to peg $v_{k}$ only if the moves from peg $u_{k}$ to peg $v_{k}$ are allowed. Thus, $(u_{k},v_{k})\in A(D)$.

\end{proof}

Theorem \ref{theoremI} allows us to design the simple Algorithm \ref{alg:gen} to generate the restricted Hanoi graphs  $H_{n}^{m}(D)$ for any $n\geq 0$, $m\geq 3$, and digraph of movements $D$ using the boolean function described in Algorithm \ref{alg:cond} that returns true if and only if a candidate arc $(u,v)\in V(H_{n}^{m}(D))$ is admissible to be in  $A(H_{n}^{m}(D))$. 
\begin{algorithm}[H]
		\caption{A boolean function to decide whether an arc is in $H_{n}^{m}(D)$ or not }\label{alg:cond}
		\begin{algorithmic}[1]
			\Function{Condition}{$u,v$}
                \State $\mathcal{K}=\{d\in\mathcal{N}\mid u_{d}\neq v_{d}\};$
                \If{$|\mathcal{K}|\neq 1$}
                \State \Return false;
                \EndIf
                \State let $k$ be the  element of the singleton set $\mathcal{K}$;
                \If{$(u_{k},v_{k})\notin A(D)$}
                \State \Return false;
                \EndIf
                \If{$k\neq 1$}
                \For{$d$ from $k-1$ to $1$}
                \If{$u_{d}=u_{k}$ or $v_{d}=v_{k}$}
                \State \Return false;
                \EndIf
                \EndFor
                \EndIf
                \State \Return true;
			\EndFunction
		\end{algorithmic}
	\end{algorithm}
	\begin{algorithm}[H]
		\caption{The restricted Hanoi graphs generator }\label{alg:gen}
		\begin{algorithmic}[1]
			\Require $n$, $m$ and $D=(\mathcal{M},A(D))$;
                \Ensure The restricted Hanoi graph $H_{n}^{m}(D)$;
                \State generate all vertices of $V(H_{n}^{m}(D))$;
                \State  $A(H_{n}^{m}(D)):=\emptyset$;
                \For{$u\neq v\in V(H_{n}^{m}(D))$}
                \If{$\textsc{Condition}(u,v)=true$} 
                \State $A(H_{n}^{m}(D)):=A(H_{n}^{m}(D))\cup\{(u,v)\}$;
                \EndIf
                \EndFor 
                \State \Return $H_{n}^{m}(D)$;
		\end{algorithmic}
	\end{algorithm}

The first instruction in Algorithm \ref{alg:gen} which is generating the set of vertices, can be performed by any algorithm that generates all the permutations with repetition of the set of pegs $M$. Note that there are $m^{n}$ possible permutations (vertex). 
\begin{proposition}
     For all integers $n\geq0$, $m\geq 3$, and a digraph $D$ of $m$ vertices, the restricted Hanoi graph $H_{n}^{m}(D)$ can be generated  in a time complexity $\mathcal{O}(m^{2n}(m^{2}+n))$.
\end{proposition}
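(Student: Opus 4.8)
The plan is to bound the running time of Algorithm \ref{alg:gen} by separately accounting for the cost of generating the vertex set, the number of candidate arcs examined, and the cost of evaluating \textsc{Condition} on each candidate; the claimed bound then emerges by multiplication and by checking that the preprocessing is dominated.

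First I would analyze the cost of a single call to \textsc{Condition}$(u,v)$ from Algorithm \ref{alg:cond}. Building the set $\mathcal{K}=\{d\in\mathcal{N}\mid u_d\neq v_d\}$ requires comparing the $n$ coordinates of the two $n$-tuples $u$ and $v$, hence $\mathcal{O}(n)$ operations; the same bound covers testing $|\mathcal{K}|\neq 1$ and extracting the unique index $k$. Testing whether $(u_k,v_k)\in A(D)$ is a membership query in the arc set of a digraph on $m$ vertices; since $|A(D)|\leq m(m-1)=\mathcal{O}(m^2)$, a linear scan resolves it in $\mathcal{O}(m^2)$ time. Finally, the loop over $d$ from $k-1$ down to $1$ performs at most $n-1$ constant-time comparisons, again $\mathcal{O}(n)$. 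Summing the three contributions, one call to \textsc{Condition} costs $\mathcal{O}(n+m^2+n)=\mathcal{O}(m^2+n)$.

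Next I would count the candidate arcs. The vertex set $V(H_{n}^{m}(D))$ consists of all $m^{n}$ tuples over $\mathcal{M}$, so the double loop of Algorithm \ref{alg:gen} iterates over the $m^{n}(m^{n}-1)=\mathcal{O}(m^{2n})$ ordered pairs $(u,v)$ with $u\neq v$, invoking \textsc{Condition} once per pair and, by Theorem \ref{theoremI}, inserting exactly the admissible arcs. Multiplying the number of pairs by the per-call cost yields $\mathcal{O}\bigl(m^{2n}(m^{2}+n)\bigr)$ for the main loop.

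It then remains to verify that the preprocessing does not dominate. Enumerating all $m^{n}$ vertices, each an $n$-tuple, costs $\mathcal{O}(nm^{n})$, while initializing the arc set is $\mathcal{O}(1)$; both are absorbed into $\mathcal{O}\bigl(m^{2n}(m^{2}+n)\bigr)$ since $nm^{n}\le m^{2n}(m^{2}+n)$ for all $m\ge 3$ and $n\ge 0$. Adding the three contributions gives the stated bound. The only delicate point is the cost attributed to the arc-membership test in step $4$ of \textsc{Condition}: the $m^{2}$ factor reflects the assumption that $D$ is stored as a plain arc list scanned linearly. Were $D$ instead supplied as an adjacency matrix, this term would collapse to $\mathcal{O}(1)$, and $\mathcal{O}\bigl(m^{2n}(m^{2}+n)\bigr)$ would remain a valid, if looser, upper bound.
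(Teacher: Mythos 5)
Your proposal is correct and follows essentially the same route as the paper: bound the cost of one call to \textsc{Condition} by $\mathcal{O}(m^{2}+n)$ (the $m^{2}$ term coming from the arc-membership test in the worst case $D=\overleftrightarrow{K}_{m}$, the $n$ terms from the coordinate comparisons), multiply by the $m^{n}(m^{n}-1)$ ordered pairs examined, and check that vertex generation is dominated. Your accounting is slightly more careful about the $\mathcal{O}(nm^{n})$ cost of writing out the vertex tuples and about the data structure assumed for $A(D)$, but the decomposition and the conclusion are the same.
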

\begin{proof}
    Consider Algorithm \ref{alg:gen} as a generator of  restricted Hanoi graph $H_{n}^{m}(D)$, and let $f(n,m)$ and $g(n,m)$ be the number of operations needed to perform Algorithm \ref{alg:gen} and \ref{alg:cond} respectively, then
    \begin{align*}
        f(n,m)&=m^{n}+m^{n}(m^{n}-1)g(n,m)+1.
    \end{align*}
   In the worst case where $D=\overleftrightarrow{K}_{m}$ and $k\neq 1$, we would have
    \begin{align*}
        g(n,m)&=m^{2}-m+3n-1\\
        &=\mathcal{O}(m^{2}+n).
    \end{align*}
    Thus, 
    \begin{align*}
        \mathcal{O}(f(n,m))&=m^{n}+(m^{2n}-m^{n})\mathcal{O}(m^{2}+n)\\
        &=\mathcal{O}(m^{2n}(m^{2}+n)).
    \end{align*}
    
\end{proof}

For the fourth condition of Theorem \ref{theoremI}, if the movement digraph was that of the classical variant $D=\overleftrightarrow{K}_{m}$ where all the moves between any pair of pegs are allowed, then this condition is not any more important because it is always fulfilled for any values of $n\geq0$, $m\geq 3$, and $k\in \mathcal{N}$. However,  we can reduce the time of verification of the fourth condition depending on the characteristics of the considered movement digraph. The three propositions below replace the fourth condition of Theorem  \ref{theoremI} by a much simpler condition in terms of time verification for the linear, cyclic, and star variants respectively.

\begin{proposition}
    
    In the linear Tower of Hanoi where $D=\overleftrightarrow{P}_{m}$, the fourth condition of Theorem \ref{theoremI} can be replaced by the condition 
    \begin{equation}
        |u_{k}-v_{k}|=1.
    \end{equation}
\end{proposition}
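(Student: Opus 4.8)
The plan is to reduce the claim to a direct unwinding of the definition of the movement digraph $\overleftrightarrow{P}_{m}$. By Theorem~\ref{theoremI}, the fourth condition reads $(u_{k},v_{k})\in A(D)$, so with $D=\overleftrightarrow{P}_{m}$ it suffices to establish, for any two pegs $p,q\in\mathcal{M}$, the equivalence
\begin{equation*}
(p,q)\in A(\overleftrightarrow{P}_{m}) \iff |p-q|=1,
\end{equation*}
and then specialise to $p=u_{k}$ and $q=v_{k}$.

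First I would fix the labelling convention, since the metric condition only makes sense relative to it: the path $P_{m}$ is drawn on the vertex set $\mathcal{M}=\{1,\ldots,m\}$ with an edge joining $i$ and $i+1$ for each $1\le i\le m-1$, exactly as depicted in Figure~\ref{fig:my_label}. Its bidirectional version $\overleftrightarrow{P}_{m}$ replaces each such edge by the two opposite arcs, so that
\begin{equation*}
A(\overleftrightarrow{P}_{m})=\{(i,i+1):1\le i\le m-1\}\cup\{(i+1,i):1\le i\le m-1\}.
\end{equation*}

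Next I would check the two directions of the displayed equivalence against this arc set. For the forward direction, every arc of $\overleftrightarrow{P}_{m}$ has the form $(i,i+1)$ or $(i+1,i)$, and in both cases the endpoints differ by exactly $1$ in absolute value. For the reverse direction, if $|p-q|=1$ then either $q=p+1$, in which case $(p,q)=(i,i+1)$ with $i=p$ (and the bound $1\le p\le m-1$ holds because $q=p+1\le m$), or $q=p-1$, in which case $(p,q)=(i+1,i)$ with $i=p-1$ (and $1\le p-1$ holds because $q=p-1\ge 1$); either way $(p,q)\in A(\overleftrightarrow{P}_{m})$. Substituting $p=u_{k}$ and $q=v_{k}$ then shows that the fourth condition of Theorem~\ref{theoremI} is equivalent to $|u_{k}-v_{k}|=1$.

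I do not expect any genuine obstacle here: the statement is essentially a restatement of the definition of the bidirectional path. The only point requiring care is conceptual rather than technical, namely recording the consecutive labelling $1,2,\ldots,m$ of the pegs along the path before reading off $A(\overleftrightarrow{P}_{m})$. It is precisely this labelling that lets adjacency in $\overleftrightarrow{P}_{m}$ be expressed by the arithmetic condition $|u_{k}-v_{k}|=1$; under a different vertex ordering the same result would instead have to be phrased as ``$u_{k}$ and $v_{k}$ are consecutive in the path order.'' Making this convention explicit is the single substantive step of the argument.
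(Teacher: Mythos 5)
Your proposal is correct and follows essentially the same route as the paper: both arguments simply unwind the definition of $\overleftrightarrow{P}_{m}$ under the consecutive labelling of the pegs and read off that adjacency means $|u_{k}-v_{k}|=1$. If anything, you are slightly more careful than the paper, which only explicitly verifies that every arc of the path yields $|p-q|=1$ and leaves the converse (that $|p-q|=1$ forces $(p,q)\in A(\overleftrightarrow{P}_{m})$) implicit in the phrase ``the only allowed movements are those between adjacent pegs,'' whereas you check both directions of the equivalence.
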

\begin{proof}
    In the linear variant, the only allowed movements are those between adjacent pegs when assuming that pegs are aligned from peg $1$ to peg $m$, which implies that a move of a disc from peg $p$ is either to peg $p+1$ for all $p\in \mathcal{M}\setminus\{m\}$ or to   $p-1$ for all $p\in \mathcal{M}\setminus\{1\}$. Hence, $|p-(p+1)|=|p-(p-1)|=1$ for all $p\in \mathcal{M}$.
\end{proof}

\begin{proposition}
    In the cyclic Tower of Hanoi where $D=\overrightarrow{C}_{m}$, the fourth condition of Theorem \ref{theoremI} can be replaced by the condition 
    \begin{equation}
        v_{k}-u_{k}\in\{1,m-1\}.
    \end{equation}

\end{proposition}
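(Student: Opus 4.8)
The plan is to mirror the proof of the linear case, replacing path-adjacency by directed-cycle-adjacency and then reading off the resulting arithmetic condition on the signed difference $v_{k}-u_{k}$. First I would recall the arc set of $\overrightarrow{C}_{m}$ explicitly: arranging the pegs cyclically as $1,2,\ldots,m$ and back to $1$, the arcs are the consecutive steps $(p,p+1)$ for every $p\in\mathcal{M}\setminus\{m\}$ together with the single closing arc $(m,1)$. Hence the fourth condition $(u_{k},v_{k})\in A(\overrightarrow{C}_{m})$ of Theorem \ref{theoremI} holds if and only if disc $k$ moves along one of these $m$ arcs, and the proposition amounts to encoding ``is one of these arcs'' as a statement about $v_{k}-u_{k}$.

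Next I would split into the two kinds of arcs. Along a consecutive arc $(p,p+1)$ with $p<m$ we have $u_{k}=p$, $v_{k}=p+1$, and therefore $v_{k}-u_{k}=1$, which is the first admissible value. The closing arc $(m,1)$ is the second case, and it supplies the other value occurring in the statement; here I would check carefully that, with the cyclic labelling fixed, this single wrap-around arc is exactly the configuration producing the remaining admissible value, and that no other pair $(u_{k},v_{k})$ with entries in $\mathcal{M}$ does so. The hypothesis $m\geq 3$ is what guarantees the two admissible values are distinct, so that the characterization partitions the arcs cleanly. Conversely, I would verify that any pair satisfying the arithmetic condition is genuinely an arc of $\overrightarrow{C}_{m}$, which establishes the two-way equivalence and licenses replacing the fourth condition by it inside Theorem \ref{theoremI}.

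The point needing the most care---exactly as one expects from the fact that $\overrightarrow{C}_{m}$ is one-directional rather than symmetric, in contrast with $\overleftrightarrow{P}_{m}$---is the wrap-around arc $(m,1)$. There one must pin down the precise value of $v_{k}-u_{k}$ contributed by closing the cycle, most transparently by working modulo $m$ so that every admissible move satisfies $v_{k}\equiv u_{k}+1$, and then making sure that the ``reverse'' consecutive pairs $(p+1,p)$, which are \emph{not} arcs of the directed cycle, are correctly excluded. Once the arc set is pinned down as above, the forward implication is immediate and the converse is a short finite check, so the whole argument is elementary; essentially all of the content sits in the bookkeeping around this single exceptional arc.
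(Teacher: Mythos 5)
Your plan is the same as the paper's: list the arcs of $\overrightarrow{C}_{m}$ as $(p,p+1)$ for $p\in\mathcal{M}\setminus\{m\}$ together with $(m,1)$, and read off the value of $v_{k}-u_{k}$ on each. The consecutive arcs indeed give $v_{k}-u_{k}=1$. But the one computation you explicitly defer --- ``check carefully that this single wrap-around arc is exactly the configuration producing the remaining admissible value'' --- is never performed, and it is precisely where the argument breaks. On the arc $(m,1)$ you have $u_{k}=m$ and $v_{k}=1$, so $v_{k}-u_{k}=1-m=-(m-1)$, not $m-1$. Your own modular reformulation betrays this: $v_{k}\equiv u_{k}+1\pmod m$ with $u_{k},v_{k}\in\{1,\dots,m\}$ is equivalent to $v_{k}-u_{k}\in\{1,\,1-m\}$, which is not the set $\{1,\,m-1\}$ appearing in the statement. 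The converse check you promise would expose the same problem from the other side: the pair $(u_{k},v_{k})=(1,m)$ satisfies $v_{k}-u_{k}=m-1$ yet $(1,m)\notin A(\overrightarrow{C}_{m})$, so the arithmetic condition as written admits the reverse of the wrap-around arc rather than the wrap-around arc itself.

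For what it is worth, the paper's own proof makes exactly the same sign slip (it asserts ``$v_{k}-u_{k}=m-1$'' immediately after describing the move from peg $m$ to peg $1$), so you have reproduced the published argument faithfully, gap included. A correct replacement condition is $v_{k}-u_{k}\in\{1,\,1-m\}$, equivalently $u_{k}-v_{k}\in\{-1,\,m-1\}$, equivalently $v_{k}\equiv u_{k}+1\pmod m$. To count your proposal as a proof you must actually carry out the wrap-around computation and state whichever of these conditions you intend, rather than asserting that the exceptional arc ``supplies the other value occurring in the statement.''
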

\begin{proof}
    In the cyclic Tower of Hanoi, the only allowed movements are those from peg $p$ to peg $p+1$ for all $p\in  \mathcal{M}\setminus\{m\}$, and from peg $m$ to peg $1$. Therefore, we should have either $v_{k}-u_{k}=p+1-p=1$ or $v_{k}-u_{k}=m-1$. Hence, the result.
\end{proof}
\begin{proposition}
    In the star Tower of Hanoi where $D=\overleftrightarrow{K}_{1,m-1}$, the fourth condition of Theorem \ref{theoremI} can be replaced by the condition 
    \begin{equation}
        |u_{k}-v_{k}|=\max\{u_{k},v_{k}\}-1.
    \end{equation}

\end{proposition}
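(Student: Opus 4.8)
The plan is to translate the combinatorial description of the star digraph $\overleftrightarrow{K}_{1,m-1}$ into the purely arithmetic condition displayed in the statement. First I would fix the labelling convention drawn in Figure~\ref{fig:my_label}, namely that peg $1$ is the center of the star and pegs $2,\ldots,m$ are the leaves. Under this convention a move is admissible exactly when it goes between the center and a leaf, so an arc $(p,q)$ with $p\neq q$ lies in $A(\overleftrightarrow{K}_{1,m-1})$ if and only if one of $p,q$ equals $1$; since $1$ is the smallest peg index, this is the same as saying $\min\{p,q\}=1$. Because condition~$1$ of Theorem~\ref{theoremI} already forces $u_{k}\neq v_{k}$, the fourth condition $(u_{k},v_{k})\in A(D)$ is therefore equivalent to $\min\{u_{k},v_{k}\}=1$.

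It then remains to establish the arithmetic equivalence
\[
\min\{u_{k},v_{k}\}=1 \iff |u_{k}-v_{k}|=\max\{u_{k},v_{k}\}-1 .
\]
For this I would invoke the elementary identity $|a-b|=\max\{a,b\}-\min\{a,b\}$, valid for all reals $a,b$. Taking $a=u_{k}$ and $b=v_{k}$ rewrites the right-hand side of the proposition as $\max\{u_{k},v_{k}\}-\min\{u_{k},v_{k}\}=\max\{u_{k},v_{k}\}-1$, which after cancelling $\max\{u_{k},v_{k}\}$ is precisely $\min\{u_{k},v_{k}\}=1$. Combining this with the first paragraph yields the desired replacement of the fourth condition.

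I do not expect a genuine obstacle here; the argument is essentially a one-line case distinction. The only two points that must be recorded carefully are the convention that peg $1$ is the center (without it the formula would involve the center's index rather than $1$), and the use of $u_{k}\neq v_{k}$ from condition~$1$ to discard the degenerate case, in which both sides of the equivalence would fail simultaneously. As a final sanity check I would verify the two regimes by hand: a move between the center and a leaf $j$ gives $|1-j|=j-1=\max\{1,j\}-1$, whereas a move between two leaves $2\le i<j$ gives $|i-j|=j-i\neq j-1=\max\{i,j\}-1$, in agreement with the arcs appearing in $H_{2}^{4}(\overleftrightarrow{K}_{1,3})$ of Figure~\ref{fig:my_label2}.
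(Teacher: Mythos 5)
Your proposal is correct and follows essentially the same route as the paper: both identify the admissible moves as exactly those with one endpoint equal to the center peg $1$ and then verify that this matches the arithmetic condition, the paper by splitting into the two cases $v_k=1$ and $u_k=1$, and you by the equivalent observation that $|u_k-v_k|=\max\{u_k,v_k\}-\min\{u_k,v_k\}$ reduces the condition to $\min\{u_k,v_k\}=1$. Your packaging via the $\max$--$\min$ identity is slightly cleaner and makes the equivalence (both directions) explicit, but it is not a genuinely different argument.
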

\begin{proof}
    In the star Tower of Hanoi, the only allowed moves are those from peg $p$ to peg $1$ and from peg $1$ to peg $p$ for all $p\in M\setminus\{1\}$. Thus, to insure that the fourth condition is verified we should have either $u_{k}\neq 1$ and $v_{k}=1$ or $u_{k}=1 $ and $v_{k}\neq 1$, which is equivalent to $|u_{k}-v_{k}|=u_{k}-1$ (the first scenario) or  $|u_{k}-v_{k}|=v_{k}-1$ (the second scenario). Hence, the result.
\end{proof}

\section{Combinatorics on the restricted Hanoi graphs}
In this section we establish some combinatorial results of the restricted Hanoi graphs $H_{n}^{m}(D)$.

Let $t_{k}^{n,m}(D)$ be the number of arcs in the restricted  Hanoi graph $H_{n}^{m}(D)$  that corresponds to the moves of disc $k\in N$, i.e., $t_{k}^{n,m}$ is the number of states that allows disc $k$ to be moved.
\begin{proposition}\label{theo_tknmD}
For all integers $n\geq0$, $m\geq 3$, and a digraph $D$ of $m$ vertices, we have
        \begin{equation}
        t_{k}^{n,m}(D)=|A(D)|m^{n-k}(m-2)^{k-1}.
    \end{equation}
\end{proposition}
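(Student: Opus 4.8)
The plan is to use the characterization of arcs given in Theorem~\ref{theoremI}, reading its conditions as both necessary and sufficient: since conditions 1--4 transcribe exactly the three rules $(i)$--$(iii)$ together with the digraph restriction, they identify the arcs of $H_{n}^{m}(D)$ that move disc $k$ with precisely those pairs $(u,v)$ satisfying the four conditions for that fixed index $k$. Such a pair is completely specified by two independent pieces of data: the ordered pair $(u_{k},v_{k})$ of source and target pegs of disc $k$, and the common coordinates $u_{d}=v_{d}$ for $d\neq k$. First I would observe that conditions 1--4 split into constraints on three disjoint blocks of coordinates---the index $k$ itself, the indices $d<k$, and the indices $d>k$---so that the number of admissible arcs factors as a product.

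Next I would count each block separately. For the index $k$, condition 4 forces $(u_{k},v_{k})\in A(D)$, contributing exactly $|A(D)|$ choices; since every arc of a movement digraph joins two distinct pegs, this automatically secures $u_{k}\neq v_{k}$ as demanded by condition 1. For each index $d<k$, conditions 2 and 3 together read $u_{d}=v_{d}\notin\{u_{k},v_{k}\}$, and because $u_{k}\neq v_{k}$ this forbidden set has exactly two elements, leaving $m-2$ admissible pegs; over the $k-1$ such indices this yields $(m-2)^{k-1}$. For each index $d>k$ no condition applies, so $u_{d}=v_{d}$ ranges freely over all $m$ pegs, giving $m^{n-k}$ choices in total.

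Finally I would assemble these counts by the multiplication principle, after checking that the assignment from a triple of choices to the pair $(u,v)$ is a bijection onto the disc-$k$ arcs: distinct choices clearly produce distinct pairs, and every admissible arc arises from exactly one such choice. This gives $t_{k}^{n,m}(D)=|A(D)|\,m^{n-k}(m-2)^{k-1}$, with the boundary cases $k=1$ and $k=n$ covered automatically by the empty-product conventions $(m-2)^{0}=1$ and $m^{0}=1$. The step demanding the most care is the decoupling: one must confirm that conditions 2 and 3 constrain only the coordinates $d<k$ relative to the already-fixed pair $(u_{k},v_{k})$, so that the three blocks are genuinely independent and no arc is counted twice or omitted. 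Granting that, the enumeration is a routine product count.
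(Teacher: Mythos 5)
Your proof is correct and follows essentially the same route as the paper: both arguments factor the count into $|A(D)|$ choices for the ordered pair $(u_{k},v_{k})$, $(m-2)^{k-1}$ placements for the discs smaller than $k$, and $m^{n-k}$ free placements for the discs larger than $k$, then multiply. Your version is somewhat more careful --- in particular about reading the conditions of Theorem~\ref{theoremI} as sufficient as well as necessary, and about the blocks being independent --- but the underlying decomposition is identical to the paper's.
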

\begin{proof}
The arcs to be counted by $t_{k}^{n,m}(D)$  are those that link a state to another by respecting the conditions of Theorem \ref{theoremI}. Therefore, according to condition $1$, $2$ and $3$, the number of possible distributions of discs $n,\ldots,k+1$ (bigger than $k$) on pegs is $m^{n-k}$, and that of  discs $k-1,\ldots,1$ (smaller than $k$) is $(m-2)^{k-1}$. Because, the discs that are bigger than $k$ are allowed to be stacked on any peg among the $m$ pegs, while the discs that are smaller than $k$ are not allowed to be stacked on the peg on which disc $k$ is lying on and the one on which disc $k$ will be moved on. And according to condition $3$, the number of possible pairs of these two last described pegs is exactly the number of arcs in the movement digraph $D$ which is $|A(D)|$, because disc $k$ can be moved from peg $p$ to peg $q$ only if $(p,q)\in A(D)$. Thus, the result is obtained by multiplying the three quantities.

\end{proof}
\begin{corollary}
   For all integers $n\geq0$, $m\geq 3$, we have
   \begin{align}
       t_{k}^{n,m}(\overleftrightarrow{K}_{m})&=(m-1)m^{n-k+1}(m-2)^{k-1},\\
       t_{k}^{n,m}(\overrightarrow{C}_{m})&=m^{n-k+1}(m-2)^{k-1},\\
        t_{k}^{n,m}(\overleftrightarrow{P}_{m})&=t_{k}^{n,m}(\overleftrightarrow{K}_{1,m-1})=2(m-1)m^{n-k}(m-2)^{k-1}.
   \end{align}
\end{corollary}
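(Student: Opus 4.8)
The plan is to apply Proposition~\ref{theo_tknmD} directly, since the general formula $t_{k}^{n,m}(D)=|A(D)|\,m^{n-k}(m-2)^{k-1}$ isolates all dependence on the movement digraph into the single factor $|A(D)|$. Thus the entire task reduces to computing the number of arcs $|A(D)|$ for each of the four named digraphs and substituting.

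First I would count the arcs of each digraph. The complete symmetric digraph $\overleftrightarrow{K}_{m}$ contains an arc for every ordered pair of distinct vertices, so $|A(\overleftrightarrow{K}_{m})|=m(m-1)$. The directed cycle $\overrightarrow{C}_{m}$ has exactly one arc leaving each vertex, giving $|A(\overrightarrow{C}_{m})|=m$. The bidirectional path $\overleftrightarrow{P}_{m}$ has $m-1$ underlying edges, each contributing two oppositely directed arcs, so $|A(\overleftrightarrow{P}_{m})|=2(m-1)$; the bidirectional star $\overleftrightarrow{K}_{1,m-1}$ likewise has $m-1$ edges (from the center to each of the $m-1$ leaves) and hence $|A(\overleftrightarrow{K}_{1,m-1})|=2(m-1)$ as well.

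Substituting each of these counts into the formula of Proposition~\ref{theo_tknmD} yields the four claimed expressions:
\begin{align*}
    t_{k}^{n,m}(\overleftrightarrow{K}_{m})&=m(m-1)\,m^{n-k}(m-2)^{k-1}=(m-1)m^{n-k+1}(m-2)^{k-1},\\
    t_{k}^{n,m}(\overrightarrow{C}_{m})&=m\,m^{n-k}(m-2)^{k-1}=m^{n-k+1}(m-2)^{k-1},\\
    t_{k}^{n,m}(\overleftrightarrow{P}_{m})&=t_{k}^{n,m}(\overleftrightarrow{K}_{1,m-1})=2(m-1)\,m^{n-k}(m-2)^{k-1}.
\end{align*}
The last equality is immediate because $\overleftrightarrow{P}_{m}$ and $\overleftrightarrow{K}_{1,m-1}$ have the same number of arcs, even though they are non-isomorphic digraphs; the formula of Proposition~\ref{theo_tknmD} sees only the arc count, so the two variants coincide for this quantity.

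This argument is essentially a direct computation, so I do not anticipate a genuine obstacle. The only place that warrants care is the arc count itself: one must remember that $\overleftrightarrow{K}_{m}$ and the path/star variants are \emph{symmetric} (undirected edges realized as pairs of arcs), whereas $\overrightarrow{C}_{m}$ is a genuinely directed cycle with only $m$ arcs rather than $2m$. Keeping that distinction straight is the single point where an error could creep in; everything else is substitution and algebraic simplification.
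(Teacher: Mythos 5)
Your proposal is correct and matches the paper's (implicit) argument: the corollary is stated without proof precisely because it is the direct substitution of $|A(\overleftrightarrow{K}_{m})|=m(m-1)$, $|A(\overrightarrow{C}_{m})|=m$, and $|A(\overleftrightarrow{P}_{m})|=|A(\overleftrightarrow{K}_{1,m-1})|=2(m-1)$ into Proposition~\ref{theo_tknmD}. Your arc counts and the resulting simplifications are all accurate.
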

The next corollary presents two recurrence relations of $t_{k}^{n,m}$ where the first one is in terms of $k$, and the second one is in terms of $n$. 

\begin{corollary}
For all integers $n\geq 0$, $m\geq 3$, and movement digraph $D$,  we have
\begin{align}
    t_{k+1}^{n,m}(D)&=\frac{m-2}{m}t_{k}^{n,m}(D),\\
    t_{k}^{n+1,m}(D)&=m t_{k}^{n,m}(D).
\end{align}
\end{corollary}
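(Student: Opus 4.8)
The plan is to derive both recurrences directly from the closed-form expression established in Proposition \ref{theo_tknmD}, namely $t_{k}^{n,m}(D)=|A(D)|m^{n-k}(m-2)^{k-1}$. Since this formula expresses $t_{k}^{n,m}(D)$ as a product of the constant factor $|A(D)|$ with two power terms whose exponents depend linearly on $n$ and $k$, each recurrence reduces to tracking how a single increment in the relevant index shifts those exponents. No combinatorial argument is needed beyond the already-proven count; the entire task is algebraic simplification.

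For the first relation, I would substitute $k+1$ for $k$ in the formula to obtain $t_{k+1}^{n,m}(D)=|A(D)|m^{n-k-1}(m-2)^{k}$, and then rewrite the right-hand side as $\frac{m-2}{m}\cdot|A(D)|m^{n-k}(m-2)^{k-1}$ by extracting one factor of $\frac{1}{m}$ from the power of $m$ and one factor of $(m-2)$ from the power of $(m-2)$. Recognizing the remaining product as $t_{k}^{n,m}(D)$ yields the claimed identity. For the second relation, I would instead substitute $n+1$ for $n$ to get $t_{k}^{n+1,m}(D)=|A(D)|m^{n-k+1}(m-2)^{k-1}$, and factor out a single $m$ so that this displays as $m\cdot t_{k}^{n,m}(D)$.

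Since both steps are purely formal manipulations of a known closed form, there is no genuine obstacle. The only point deserving minor care is the exponent bookkeeping: confirming that the exponent of $m$ drops from $n-k$ to $n-k-1$ while the exponent of $(m-2)$ rises from $k-1$ to $k$ in the first case, and that the exponent of $m$ rises from $n-k$ to $n-k+1$ with the exponent of $(m-2)$ unchanged in the second case, so that precisely the ratio $\frac{m-2}{m}$ and the factor $m$ emerge as stated.
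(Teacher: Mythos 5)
Your proposal is correct and follows exactly the route the paper intends: the corollary is stated without proof precisely because it is an immediate algebraic consequence of the closed form $t_{k}^{n,m}(D)=|A(D)|m^{n-k}(m-2)^{k-1}$ from Proposition \ref{theo_tknmD}, and your exponent bookkeeping for both substitutions is accurate.
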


Using $t_{k}^{n,m}(D)$, it will be easy to calculate the number of arcs in $H_{n}^{m}(D)$, which is the result of the next theorem. Not that the number of arcs in $H_{n}^{m}(D)$ where $D$ is an undirected connected graph has been given in \cite{hinz2022dudeney} and proved using induction proof based on a  recurrence relation for the graphs  $H_{n}^{m}(D)$  with an undirected connected movement graph $D$.

\begin{theorem}
        For all integers $n\geq 0$, $m\geq 3$, and movement digraph $D$, the number of arcs in $H_{n}^{m}(D)$ is 
    \begin{equation}
        |A(H_{n}^{m}(D))|= \sum\limits_{k=1}^{n}t_{k}^{n,m}(D)=\frac{1}{2}|A(D)|(m^{n}-(m-2)^{n}).
    \end{equation}
\end{theorem}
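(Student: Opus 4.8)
The plan is to split the proof into two independent parts matching the two equalities in the statement. For the first equality, I would invoke the uniqueness clause of Theorem~\ref{theoremI}: every arc $(u,v)\in A(H_n^m(D))$ is associated with a \emph{unique} disc $k\in\mathcal{N}$ that is moved. Consequently the families of arcs counted by $t_1^{n,m}(D),\ldots,t_n^{n,m}(D)$ are pairwise disjoint and their union is all of $A(H_n^m(D))$. Thus $|A(H_n^m(D))|=\sum_{k=1}^{n}t_k^{n,m}(D)$ is simply the statement that these counts partition the arc set, and no argument beyond the already-established uniqueness is required.

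For the second equality, I would substitute the closed form from Proposition~\ref{theo_tknmD}, namely $t_k^{n,m}(D)=|A(D)|\,m^{n-k}(m-2)^{k-1}$, factor out $|A(D)|$, and evaluate the remaining sum. After re-indexing with $j=k-1$, the sum becomes $\sum_{j=0}^{n-1}m^{\,n-1-j}(m-2)^{j}$, which has the form $\sum_{j=0}^{n-1}a^{\,n-1-j}b^{\,j}$ with $a=m$ and $b=m-2$. I would then apply the standard factorization $a^n-b^n=(a-b)\sum_{j=0}^{n-1}a^{\,n-1-j}b^{\,j}$; since $a-b=m-(m-2)=2$, the sum collapses to $\tfrac{1}{2}\bigl(m^n-(m-2)^n\bigr)$, and multiplying by $|A(D)|$ yields the claimed value.

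There is no serious obstacle: the whole argument rests on (i) the uniqueness of the moved disc, which is already proved, and (ii) a geometric-type telescoping identity. The one point deserving care is the boundary case $n=0$, where the index set $\{1,\ldots,n\}$ is empty: the sum is then vacuous and equals $0$, matching $\tfrac{1}{2}|A(D)|\bigl(m^0-(m-2)^0\bigr)=0$, consistent with the single state of $H_0^m(D)$ having no arcs. It is also worth noting that the identity requires $a\neq b$, i.e.\ $m\neq m-2$, which always holds, so the degenerate case never arises.
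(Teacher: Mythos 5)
Your proposal is correct and follows exactly the route the paper intends: the paper states the theorem as an immediate consequence of Proposition~\ref{theo_tknmD} (omitting a written proof), namely partitioning the arcs by the unique disc moved and summing the closed forms $t_k^{n,m}(D)=|A(D)|m^{n-k}(m-2)^{k-1}$ via the identity $a^n-b^n=(a-b)\sum_{j=0}^{n-1}a^{n-1-j}b^{j}$ with $a-b=2$. Your explicit treatment of the partition argument and the vacuous case $n=0$ supplies precisely the details the paper leaves implicit.
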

\begin{corollary}
   For all integers $n\geq0$, $m\geq 3$, we have
\begin{align}
    |A(H_{n}^{m}(\overleftrightarrow{K}_{m}))|&=\frac{1}{2}m(m-1)(m^{n}-(m-2)^{n}),\\
    |A(H_{n}^{m}(\overrightarrow{C}_{m}))|&=\frac{1}{2}m(m^{n}-(m-2)^{n}),\\
    |A(H_{n}^{m}(\overleftrightarrow{P}_{m}))|&=|A(H_{n}^{m}(\overleftrightarrow{K}_{1,m-1}))|=(m-1)(m^{n}-(m-2)^{n}).
\end{align}
\end{corollary}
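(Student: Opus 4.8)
The plan is to prove the two equalities in turn. The first, $|A(H_{n}^{m}(D))|=\sum_{k=1}^{n}t_{k}^{n,m}(D)$, is a count‑by‑partition argument. By the first condition of Theorem \ref{theoremI}, every arc $(u,v)\in A(H_{n}^{m}(D))$ determines a \emph{unique} index $k\in\mathcal{N}$ for which $u_{k}\neq v_{k}$, all other discs staying put. Hence the arc set partitions into the $n$ pairwise disjoint classes $A_{k}=\{(u,v)\in A(H_{n}^{m}(D)) : u_{k}\neq v_{k}\}$ for $k=1,\dots,n$, and by the very definition of $t_{k}^{n,m}(D)$ we have $|A_{k}|=t_{k}^{n,m}(D)$. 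Summing the sizes of the classes yields the first equality. (When $n=0$ both sides are $0$, so the claim holds vacuously; this also matches the right‑hand side since $m^{0}-(m-2)^{0}=0$.)

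It then remains to evaluate the sum in closed form. Substituting the formula from Proposition \ref{theo_tknmD} and factoring out the constant $|A(D)|$ reduces the problem to a single finite sum:
\[
\sum_{k=1}^{n}t_{k}^{n,m}(D)=|A(D)|\sum_{k=1}^{n}m^{\,n-k}(m-2)^{\,k-1}.
\]
The key step is to recognize the remaining sum as a geometric‑type sum. Setting $a=m$ and $b=m-2$ and reindexing with $j=k-1$, it becomes $\sum_{j=0}^{n-1}a^{\,n-1-j}b^{\,j}$, which equals $\dfrac{a^{n}-b^{n}}{a-b}$ by the standard factorization of $a^{n}-b^{n}$.

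Since $a-b=m-(m-2)=2\neq 0$, no degenerate case arises and the denominator is harmless; the sum evaluates to $\tfrac{1}{2}\bigl(m^{n}-(m-2)^{n}\bigr)$. Multiplying by $|A(D)|$ gives the claimed value $\tfrac{1}{2}|A(D)|\bigl(m^{n}-(m-2)^{n}\bigr)$. The argument is essentially mechanical once the partition is established; the only point that genuinely needs care is the first equality, where one must invoke the uniqueness of the moved disc guaranteed by Theorem \ref{theoremI} to be sure the classes $A_{k}$ are both disjoint and exhaustive. The closed‑form evaluation afterwards is routine, so I do not anticipate a substantive obstacle.
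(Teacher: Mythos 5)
Your write-up is a correct derivation of the general identity $|A(H_{n}^{m}(D))|=\sum_{k=1}^{n}t_{k}^{n,m}(D)=\tfrac{1}{2}|A(D)|\bigl(m^{n}-(m-2)^{n}\bigr)$: the partition of the arc set into the classes $A_{k}$ via the unique moved disc from Theorem \ref{theoremI}, the substitution of Proposition \ref{theo_tknmD}, and the geometric-sum evaluation with $a-b=2$ are all sound. But that identity is the theorem stated immediately before this corollary in the paper, which you were entitled to cite; it is not the statement you were asked to prove. The corollary's entire content, given that theorem, is the specialization to the four named movement digraphs, and your argument never touches them: nowhere do you determine $|A(D)|$ for $\overleftrightarrow{K}_{m}$, $\overrightarrow{C}_{m}$, $\overleftrightarrow{P}_{m}$, or $\overleftrightarrow{K}_{1,m-1}$, so the four left-hand sides are never connected to the stated right-hand sides.

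The missing step is short but it is the whole point: $|A(\overleftrightarrow{K}_{m})|=m(m-1)$ (every ordered pair of distinct pegs is an arc), $|A(\overrightarrow{C}_{m})|=m$ (one outgoing arc per peg), and $|A(\overleftrightarrow{P}_{m})|=|A(\overleftrightarrow{K}_{1,m-1})|=2(m-1)$ (a path and a star on $m$ vertices each have $m-1$ edges, each contributing two arcs when bidirected). Substituting these into $\tfrac{1}{2}|A(D)|\bigl(m^{n}-(m-2)^{n}\bigr)$ yields precisely the three displayed formulas, including the coincidence of the linear and star cases. Add that substitution and the proof is complete.
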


Let $u=u_{n}\ldots u_{1}$ be a vertex of $H_{n}^{m}(D)$, then for all $(p,q)\in D$,  $\mathcal{P}_{u}^{p}=\{i\mid u_{i}=p\}$ and $\mathcal{Q}_{u}^{q}=\{i\mid u_{i}=q\}$ represents the set of discs lying on peg $p$ and $q$ respectively in accordance with distribution of discs of the state $u$. Then the outdegree of vertex $u$ can be calculated  using the following simple algorithm.

\begin{algorithm}[H]
		\caption{A function to calculate the outdegree of vertices of $H_{n}^{m}(D)$ }
		\begin{algorithmic}[1]
			\Function{Degree}{$u$}
                \State $d^{+}(u):=0$;
                \For{$(p,q)\in A(D)$}
                \If{$\mathcal{P}_{u}^{p}\neq \emptyset$}
                \If{$\mathcal{Q}_{u}^{q}=\emptyset$ or $\min \mathcal{P}_{u}^{p} < \min \mathcal{Q}_{u}^{q} $ }
                \State $d^{+}(u):=d^{+}(u)+1$;
                \EndIf
			\EndIf
                \EndFor
                \State \Return $d^{+}(u)$.
			\EndFunction
		\end{algorithmic}
	\end{algorithm}

\begin{theorem}
For all integers $n\geq 0$, $m\geq 3$, and movement digraph $D$, the degree of $u\in V(H_{n}^{m}(D))$ can be calculated using the following expression.
    \begin{equation}
        d^{+}(u)=\sum\limits_{(p,q)\in A(D)} x_{\mathcal{P}_{u}^{p}}(y_{\mathcal{P}_{u}^{p},\mathcal{Q}_{u}^{q}}-x_{\mathcal{Q}_{u}^{q}}+1).
    \end{equation}
    Where $x_{A}$ and $y_{A,B}$ are two indicator functions defined as follows, 
    \begin{equation*}x_{A}=
        \begin{cases}
            1,&\text{if $A\neq \emptyset$};\\
            0,&\text{otherwise.}
        \end{cases}\quad\quad \quad
        y_{A,B}=
        \begin{cases}
            1,&\text{if $x_{A}=x_{B}=1$ and $\min A\leq \min B$};\\
            0,&\text{otherwise.}
        \end{cases}
    \end{equation*}
\end{theorem}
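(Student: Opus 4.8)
The plan is to show that each summand $x_{\mathcal{P}_{u}^{p}}(y_{\mathcal{P}_{u}^{p},\mathcal{Q}_{u}^{q}}-x_{\mathcal{Q}_{u}^{q}}+1)$ is exactly the indicator of the event ``the move that carries the topmost disc of peg $p$ onto peg $q$ is a legal outgoing move from $u$'', and then to sum this indicator over all arcs of $D$. First I would record that, by Theorem~\ref{theoremI}, every outgoing arc $(u,v)$ from $u$ corresponds to moving a single disc $k$ from peg $u_{k}$ to peg $v_{k}$ with $(u_{k},v_{k})\in A(D)$; since only a topmost disc may move and the topmost disc on a peg is its smallest-indexed disc, the disc moved off peg $p$ is necessarily $\min\mathcal{P}_{u}^{p}$. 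Conversely, each arc $(p,q)\in A(D)$ determines at most one such move (transferring $\min\mathcal{P}_{u}^{p}$ to $q$), and distinct arcs yield distinct target states $v$, so $d^{+}(u)$ equals the number of arcs $(p,q)\in A(D)$ for which this move is legal. This reduces the theorem to proving that the summand equals the legality indicator for the arc $(p,q)$.

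Legality of the move via $(p,q)$, by rules $(ii)$–$(iii)$, amounts to two requirements: peg $p$ must be nonempty, i.e.\ $\mathcal{P}_{u}^{p}\neq\emptyset$; and the disc $\min\mathcal{P}_{u}^{p}$ may land on $q$, i.e.\ either $\mathcal{Q}_{u}^{q}=\emptyset$ or $\min\mathcal{P}_{u}^{p}<\min\mathcal{Q}_{u}^{q}$. Writing $A=\mathcal{P}_{u}^{p}$ and $B=\mathcal{Q}_{u}^{q}$, I would verify the identity $x_{A}(y_{A,B}-x_{B}+1)=\mathbf{1}[\text{move legal}]$ by a short case analysis on the emptiness of $A$ and $B$. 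If $A=\emptyset$ then $x_{A}=0$ and both sides vanish. If $A\neq\emptyset$ and $B=\emptyset$, then $y_{A,B}=0$ and $x_{B}=0$, so the summand equals $1$, matching the legal landing on an empty peg. If $A\neq\emptyset$ and $B\neq\emptyset$, the summand collapses to $y_{A,B}$, which is $1$ exactly when $\min A\leq\min B$.

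The one point that needs care—and the only genuine obstacle—is reconciling the non-strict inequality $\min A\leq\min B$ built into $y_{A,B}$ with the strict inequality $\min A<\min B$ demanded by the size rule. Here I would invoke the fact that an arc $(p,q)\in A(D)$ has $p\neq q$, so the index sets $\mathcal{P}_{u}^{p}$ and $\mathcal{Q}_{u}^{q}$ are disjoint (each disc rests on exactly one peg); hence when both are nonempty their minima are distinct, and therefore $\min A\leq\min B\iff\min A<\min B$. With this equivalence the summand matches the legality indicator in every case, and summing over $(p,q)\in A(D)$ yields the claimed closed form for $d^{+}(u)$, which also certifies the correctness of the preceding \textsc{Degree} algorithm.
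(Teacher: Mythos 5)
Your proof is correct and follows essentially the same route as the paper: a case analysis on the emptiness of $\mathcal{P}_{u}^{p}$ and $\mathcal{Q}_{u}^{q}$ showing that each summand $x_{\mathcal{P}_{u}^{p}}(y_{\mathcal{P}_{u}^{p},\mathcal{Q}_{u}^{q}}-x_{\mathcal{Q}_{u}^{q}}+1)$ is the indicator that the move along the arc $(p,q)$ is legal, summed over $A(D)$. You are in fact slightly more careful than the paper, which silently uses the strict inequality $\min \mathcal{P}_{u}^{p} < \min \mathcal{Q}_{u}^{q}$ where the definition of $y_{A,B}$ has $\leq$, and which leaves implicit the bijection between legal arcs of $D$ and outgoing arcs of $u$; your observations that the two index sets are disjoint (so $\leq$ and $<$ coincide) and that distinct arcs yield distinct target states close both of these small gaps.
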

With $A,B\subseteq \mathcal{N}$.
\begin{proof}
If $\mathcal{P}_{u}^{p}=\emptyset$ then $x_{\mathcal{P}_{u}^{p}}=0$ 
 which gives  $x_{\mathcal{P}_{u}^{p}}(y_{\mathcal{P}_{u}^{p},\mathcal{Q}_{u}^{q}}-x_{\mathcal{Q}_{u}^{q}}+1)=0$ which is the case where peg $p$ is empty, so no disc can be moved from peg $p$ to another peg. Otherwise, if $\mathcal{Q}_{u}^{q}=\emptyset$  then $x_{\mathcal{Q}_{u}^{q}}=0$  and $y_{\mathcal{P}_{u}^{p},\mathcal{Q}_{u}^{q}}=0$ which implies that 
    $x_{\mathcal{P}_{u}^{p}}(y_{\mathcal{P}_{u}^{p},\mathcal{Q}_{u}^{q}}-x_{\mathcal{Q}_{u}^{q}}+1)=1$. And for the case where  $\mathcal{Q}_{u}^{q}\neq \emptyset$, we have $x_{\mathcal{Q}_{u}^{q}}=1$ and $y_{\mathcal{P}_{u}^{p},\mathcal{Q}_{u}^{q}}=1$ only if $\min \mathcal{P}_{u}^{p} < \min \mathcal{Q}_{u}^{q} $ implying   $x_{\mathcal{P}_{u}^{p}}(y_{\mathcal{P}_{u}^{p},\mathcal{Q}_{u}^{q}}-x_{\mathcal{Q}_{u}^{q}}+1)=1$.
\end{proof}

Now we can  deduce a new identity for the number of arcs in $(H_{n}^{m}(D))$.

\begin{corollary}
For all integers $n\geq 0$, $m\geq 3$, and movement digraph $D$, we have

    \begin{equation}
        |A(H_{n}^{m}(D))|=\sum\limits_{k=1}^{n}t_{k}^{n,m}(D)=\frac{1}{2}|A(D)|(m^{n}-(m-2)^{n})=\sum\limits_{  u\in V(H_{n}^{m}(D))}\sum\limits_{(p,q)\in A(D)} x_{\mathcal{P}_{u}^{p}}(y_{\mathcal{P}_{u}^{p},\mathcal{Q}_{u}^{q}}-x_{\mathcal{Q}_{u}^{q}}+1).
    \end{equation}
    
\end{corollary}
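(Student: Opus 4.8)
The plan is to establish the two claimed equalities in turn. The identity $|A(H_{n}^{m}(D))| = \sum_{k=1}^{n} t_{k}^{n,m}(D)$ is a partition argument, while the closed form then follows from Proposition \ref{theo_tknmD} by summing a geometric series. Essentially the theorem is bookkeeping: once the arc set is split according to which disc moves, the formula of Proposition \ref{theo_tknmD} does all the work.

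First I would justify the partition. By condition $1$ of Theorem \ref{theoremI}, any arc $(u,v)\in A(H_{n}^{m}(D))$ is associated with a \emph{unique} index $k\in\mathcal{N}$, namely the single disc whose peg differs between the states $u$ and $v$. Consequently the sets $A_{k}:=\{(u,v)\in A(H_{n}^{m}(D)) : u_{k}\neq v_{k}\}$, for $k=1,\dots,n$, are pairwise disjoint and exhaust $A(H_{n}^{m}(D))$, and by definition $t_{k}^{n,m}(D)=|A_{k}|$. Summing over $k$ therefore gives $|A(H_{n}^{m}(D))|=\sum_{k=1}^{n} t_{k}^{n,m}(D)$; in the degenerate case $n=0$ both sides vanish, since then the graph is a single vertex with no arcs.

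Next I would substitute the formula of Proposition \ref{theo_tknmD}, factor out $|A(D)|$, and reindex by $j=k-1$:
\begin{align*}
\sum_{k=1}^{n} t_{k}^{n,m}(D) = |A(D)| \sum_{k=1}^{n} m^{n-k}(m-2)^{k-1} = |A(D)|\,m^{n-1}\sum_{j=0}^{n-1}\left(\frac{m-2}{m}\right)^{j}.
\end{align*}
The remaining step — the only one requiring any care, though it is routine rather than a genuine obstacle — is evaluating this finite geometric series. Its common ratio is $r=(m-2)/m$, and since $m\geq 3$ the denominator $1-r=2/m$ is nonzero, so the closed form $\sum_{j=0}^{n-1} r^{j}=(1-r^{n})/(1-r)$ applies. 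Substituting $r=(m-2)/m$ and simplifying collapses the prefactor $m^{n-1}$ against $1/(1-r)=m/2$ to yield $\tfrac{1}{2}(m^{n}-(m-2)^{n})$, and multiplying by $|A(D)|$ produces the asserted identity $|A(H_{n}^{m}(D))|=\tfrac{1}{2}|A(D)|(m^{n}-(m-2)^{n})$.
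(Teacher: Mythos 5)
Your argument correctly establishes the first two equalities, but it never addresses the third one, which is the actual new content of this corollary. The identities $|A(H_{n}^{m}(D))|=\sum_{k=1}^{n}t_{k}^{n,m}(D)=\tfrac{1}{2}|A(D)|(m^{n}-(m-2)^{n})$ are exactly the statement of the paper's earlier theorem on the number of arcs, and your partition-plus-geometric-series derivation of them is sound (and is a reasonable reconstruction of how that earlier theorem follows from Proposition \ref{theo_tknmD}). The corollary, however, is introduced with the words ``Now we can deduce a new identity,'' and the identity being deduced is precisely the double sum
\[
\sum_{u\in V(H_{n}^{m}(D))}\ \sum_{(p,q)\in A(D)} x_{\mathcal{P}_{u}^{p}}\bigl(y_{\mathcal{P}_{u}^{p},\mathcal{Q}_{u}^{q}}-x_{\mathcal{Q}_{u}^{q}}+1\bigr),
\]
which your proposal omits entirely.

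The missing step is short but must be stated. The theorem immediately preceding this corollary shows that for each vertex $u$ the inner sum $\sum_{(p,q)\in A(D)} x_{\mathcal{P}_{u}^{p}}(y_{\mathcal{P}_{u}^{p},\mathcal{Q}_{u}^{q}}-x_{\mathcal{Q}_{u}^{q}}+1)$ equals the outdegree $d^{+}(u)$. In any digraph the number of arcs equals the sum of the outdegrees of its vertices, since each arc is counted exactly once at its tail; summing the outdegree formula over all $u\in V(H_{n}^{m}(D))$ therefore yields the third expression and closes the chain of equalities. Without this observation the statement is only partially proved.
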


\footnotesize

\bibliographystyle{plain}
\bibliography{BibTHL.bib}

\end{document}